\def\r{\mathbb R}
\def\l{\mathbf L}
\def\c{\mathbb C}
\def\h{\mathbb H}
\def\t{\mathbf t}
\def\n{\mathbf n}
\def\b{\mathbf b}
\def\e{\mathbf E}
\newtheorem{theorem}{Theorem}[section]
\newtheorem{corollary}[theorem]{Corollary}
\newtheorem{proposition}[theorem]{Proposition}
\theoremstyle{definition}
   \newtheorem{definition}[theorem]{Definition}
 \newtheorem{remark}[theorem]{Remark}
\begin{document}

\title[On the duality between  minimal surfaces and maximal surfaces]{On the duality between rotational minimal surfaces and maximal surfaces}
\author{Rafael L\'opez}
 \email{ rcamino@ugr.es}
 \address{Departamento de Geometr\'{\i}a y Topolog\'{\i}a\\ Instituto de Matem\'aticas (IEMath-GR)\\
 Universidad de Granada\\
 18071 Granada, Spain}
\thanks{Rafael L\'opez is the corresponding author.}

 \author{Seher Kaya}
\email {seher.kaya@ankara.edu.tr}
   \address{Department of Mathematics, Ankara University\\ Ankara, Turkey}

\begin{abstract}
We investigate the duality between  minimal surfaces in Euclidean space and maximal surfaces in   Lorentz-Minkowski space in  the family of rotational surfaces. We study if the dual surfaces of two congruent rotational minimal (or maximal) surfaces  are congruent. We show that  in the duality process by means of   a one-parameter group of rotations, it appears the family of Bonnet minimal (maximal) surfaces  and the Goursat transformations.

\end{abstract}

\subjclass[2000]{ 53A05, 53A10, 53C42}
 \keywords{minimal surface, maximal surface, Weierstrass representation, Bj\"{o}rling problem, Goursat transformation}

\maketitle

\section{Introduction}

There is a correspondence, known as duality, between minimal surfaces in Euclidean space $\e^3$ and maximal surfaces in Lorentz-Minkowski space $\l^3$.  This correspondence assigns a maximal surface to each minimal surface   and {\it vice-versa} and it was introduced by Calabi for minimal surfaces and maximal surfaces expressed as graphs on a simply-connected domain \cite{ca}.  A similar correspondence between both families of surfaces also appeared on \cite{gu,lls} where  the duality is now  defined in terms of  the isotropic curve that determines the surface and finally, it was proved in \cite{le1} that both methods are  equivalent. We describe this correspondence in terms of complex analysis and the isotropic curve. If $X:\Omega \rightarrow\e^3$ is a conformal minimal surface defined on a simply-connected domain $\Omega$ of the complex plane $\c$  and $z$ is the conformal parameter, then the complex curve $\phi:\Omega\rightarrow\c^3$ defined by  $\phi(z)=2X_z=(\phi_1,\phi_2,\phi_3)$ is holomorphic and satisfies the isotropy relation $\langle \phi,\phi \rangle =\phi_1^2+\phi_2^2+\phi_3^2=0$.  If we now define $\psi:\Omega\rightarrow\c^3$ by   $(\psi_1,\psi_2,\psi_3)=(-i\phi_1,-i\phi_2,\phi_3)$, then $\psi_1^2+\psi_2^2-\psi_3^2=0$ and consequently this  defines a  maximal surface $X^\flat:\Omega\rightarrow\l^3$  by setting $X^\flat(z)=\Re\int^z\psi(z) dz$. This process has its converse:  if $X:\Omega \rightarrow\l^3$ is a conformal maximal surface  and $\psi(z)=2X_z=(\psi_1,\psi_2,\psi_3)$, then the complex curve $\phi=(i\psi_1,i\psi_2,\psi_3)$  satisfies $\langle\phi,\phi\rangle=0$ and  defines a minimal surface   $M^\sharp$ in  $\e^3$ by means of $X^\sharp(z)=\Re\int^z \phi(z) dz$.  Furthermore, and up to   translations of the ambient space,  we have $M=(M^\sharp)^\flat$.  If $\mbox{Min}$ and $\mbox{Max}$ denote the family of minimal surfaces of $\e^3$ and the  maximal surfaces of $\l^3$, respectively, we have established two maps
$$\flat:\mbox{Min}\rightarrow\mbox{Max},\quad\quad\sharp:\mbox{Max}\rightarrow\mbox{Min}$$ 
with the property that $\flat\circ\sharp$ and $\sharp \circ\flat$ are the identities in $\mbox{Max}$ and $\mbox{Min}$ respectively. We say that $M^\flat$ (or $M^\sharp$) is the {\it dual surface} of $M$ (also named  in the literature as twin surface \cite{gu,le2,ma}). This process of duality  has been generalized in other  ambient spaces: see for example,  \cite{aa,le2,pa}

In this paper we are interested in a problem posed by Araujo and Leite in \cite{al} that asks  whether  the dual surfaces of two congruent minimal (or maximal) surfaces are also congruent. We precise the terminology.  We say that two surfaces $M_1$ and $M_2$  of $\e^3$ (or $\l^3$) are congruent, and we denote by $M_1\simeq M_2$, if there is an orientation-preserving isometry of the ambient space taking one of the surface onto the other. Here we also suppose that this relation $\simeq$ is up to an automorphism on  $M_1$ and $M_2$ and up to dilations of the ambient space because dilations preserve the zero mean curvature property.  In $\e^3$ the set of congruences preserving the orientation is $SO(3)$ and in $\l^3$ is $SO(2,1)$.     Then   the problem can be formulated as follows: 

{\bf Problem 1.} If $M_1\simeq M_2$ are  two minimal surfaces of $\e^3$, does   $M_1^\flat\cong M_2^\flat$ hold?

A similar question can be posed for maximal surfaces. Surprisingly the answer is `not in general' and   there are many congruent minimal surfaces whose dual surfaces are not congruent.  The process to consider is the following. Let $M\subset  \e^3$ be a minimal surface and  the congruence class of $M$, namely,  $\{T(M):T\in SO(3)\}$, then compute $\{T(M)^\flat:T\in SO(3)\}$  and in this set we consider the equivalence relation by congruences. If $M_T^\flat/\simeq$ stands for the corresponding quotient space, we want to determine this set. Similarly, we use the notation $M_T^\sharp/\simeq$. In \cite{al} the authors study this problem in the the case that $M$ is the Enneper surface, the Scherk surface and  the catenoid. 

In this paper we focus how the geometric properties of a minimal (or maximal) surface can transform  to its dual surface and we pay our attention for rotational  surfaces with the next  question:

{\bf Problem 2.} Is the dual surface rotational     of a rotational  minimal (or maximal) surface?

Let us observe  that  the maps $\flat$ and $\sharp$ do not carry any geometrical information of the surface because in the definition of a dual surface only it is involved the complex coordinates of the surface. We point out that the class of rotational surfaces of $\l^3$ is richer than in the Euclidean case because in $\l^3$ there exist three types of rotational  surfaces according the causal character of the axis of revolution. If we restrict to maximal surfaces, there are three types of non-congruent rotational maximal surfaces, named,  elliptic catenoid, hyperbolic catenoid  and parabolic catenoid when the rotational axis is timelike, spacelike or lightlike, respectively \cite{ko}.  
 It was proved in   \cite{al} that for an Euclidean catenoid $M$, the quotient space $M_T^\flat/\simeq$ has the topology of the closed interval $[0,1]$, obtaining a Bonnet maximal surface for $t<1$ and the hyperbolic catenoid for $t=1$.  In Section \ref{s-dual} we recover this result by 
  describing explicitly the dual surfaces of the rotational  maximal surfaces of $\l^3$. Then we take an Euclidean catenoid $C$ with axis $(0,0,1)$ and we consider the dual surfaces of the Euclidean catenoids obtained by rotation $C$ about a line orthogonal to the axis of $C$.  In Section \ref{s-dual2}  we consider each one the three catenoids $C$ of $\l^3$ and we deform by rotations about an axis with different causal character than the one of $C$. We will prove in Thms. \ref{t51}, \ref{t52} and \ref{t53} that the dual surfaces belong to the Bonnet family of minimal surfaces up to a Goursat transformation or it is the Enneper surface.

\section{Duality of minimal/maximal surfaces }

Let $\r^3$ be the vector space where $(x,y,z)$ stands for the canonical coordinates. We endow $\r^3$ with the  metric $ds^2=dx^2+dy^2+\epsilon dz^2$, with $\epsilon=1$ for the Euclidean metric and $\epsilon=-1$ for the Lorentzian metric, obtaining  the Euclidean space $\e^3$ ($\epsilon=1$) and the   Lorentz-Minkowski space $\l^3$ ($\epsilon=-1$).  Let $X:M\rightarrow(\r^3,ds^2)$ be a conformal immersion of a  (connected oriented) surface $M$  with  $X=X(z)$ and  $z=u+iv\in \c$ stands for a  conformal parameter. In case $\epsilon=-1$, we are assuming that the induced metric on $M$ via $X$ is Riemannian, that is, $(M,ds^2)$ is a spacelike surface in $\l^3$. Suppose that the immersion has zero mean curvature at every point and we say that $M$ is a minimal surface ($\epsilon=1)$ or  a maximal surface ($\epsilon=-1$). This is equivalent that the immersion $X$ is harmonic and this guarantees that the curve $\phi=\phi(z):M\rightarrow\c^3$ defined by 
$$\phi(z)=(\phi_1,\phi_2,\phi_3)=2\frac{dX}{dz}$$
is holomorphic. Therefore $\phi$ satisfies $\phi_1^2+\phi_2^2+\epsilon \phi_3^2=0$, which means  that $\phi$ lies on the complex null cone (resp. Lorentzian complex null cone) of $\c^3$ if $\epsilon=1$ (resp. $\epsilon=-1$). The curve $\phi$ is called the isotropic curve of the immersion $X$. The induced metric on the surface $M$ reads as $ds^2=|\phi_1|^2+|\phi_2|^2+\epsilon |\phi_3|^2\not\equiv 0$ and then the surface is  obtained by $X(z)=X(z_0)+\Re \int^z_{z_0} \phi(z) dz $ for any curve  connecting a given point $z_0\in M$ and $z$. The integral does not depend on the curve  which is equivalent to  $\Re\int_\gamma\phi(z) dz=0$ for any closed curve $\gamma$ in $M$ and we say that $\phi $ has no real periods. Letting
\begin{equation}\label{wei}
g=\frac{\phi_3}{\phi_1-i\phi_2},\quad\quad \omega=(\phi_1-i\phi_2)dz,
\end{equation}
 the pair $(g,\omega)$ is called the Weierstrass representation of $X$, where   $g$ is a meromorphic function on $M$ and  $\omega$ is a holomorphic $1$-form on $M$. The parametrization $X$ is now   
 \begin{equation}\label{weierstrass}X(z)=X(z_0)+\Re\int_{z_0}^z \left(\frac12 (1-\epsilon g^2)\omega, \frac{i}{2}(1+\epsilon g^2)\omega, g\omega\right)
\end{equation}
and the metric is $ds=|\omega|(1+|g|^2)/2|dz|$. In order to distinguish  minimal and maximal surfaces,  we will denote by $\phi$ the isotropic curve a minimal surface in $\e^3$ and by $\psi$ for a maximal surface in $\l^3$.  If the context is known, we do not explicit if $M$ denotes a minimal or a maximal surface. 

\begin{definition} \begin{enumerate}
\item Let $M$ be a minimal conformal surface in $\e^3$ and let  $\phi$ be its isotropic curve. The dual surface 
 of $M$ is the maximal surface $M^\flat$ of $\l^3$ whose isotropic curve is $\psi=(-i\phi_1,-i\phi_2,\phi_3)$. Equivalently, if the  Weierstrass representation of $M$ is $(g,\omega)$, then the one of $M^\flat$ is $(g^\flat,\omega^\flat)=(ig,-i\omega)$.
\item Let $M$ be a maximal conformal surface in $\l^3$ and let $\psi$ be its isotropic curve. The dual surface 
 of $M$ is the minimal surface $M^\sharp$ of $\e^3$ whose isotropic curve is $\phi=(i\psi_1,i\psi_2,\psi_3)$. Equivalently, if the  Weierstrass representation of $M$ is $(g,\omega)$, then the one of $M^\sharp$ is   $(g^\sharp,\omega^\sharp)=(-ig,i\omega)$. 
\end{enumerate}
\end{definition}

\begin{remark}\label{remark2} If $(g,\omega)$ is the Weierstrass representation of a minimal (or maximal) surface $M$, then $(ig,\omega/i)$ is the Weierstrass representation of a minimal (maximal) surface that is nothing $M$ rotated $\pi/2$ about the $z$-axis. As the dual surface  $M^\flat$  is $(ig,-i\omega)$, then $(g,\omega)$ is $M^\flat$ after a $\pi/2$-rotation about the $z$-axis. We conclude that up to rotations of the ambient space,   the Weierstrass representation of $M^\flat$ coincides with the one of $M$ (similarly for the dual surface of a maximal surface). As a consequence  the duality process consists simply into to consider the same Weierstrass data with different parametrizations of the surface according to (\ref{weierstrass}).
\end{remark}

We immediately find that the third component of the dual surface is preserved up to vertical translations.  We study the duality process under some transformations of the ambient space.

\begin{proposition}\label{pr1}
\begin{enumerate} 
\item If $T$ is a translation of the ambient space, then $T(M)^\flat\simeq M^\flat$ and $T(M)^\sharp\simeq M^\sharp$.
\item If $R_\theta$ is the rotation with respect to the axis $(0,0,1)$, then 
$R_\theta(M)^\flat\simeq M^\flat$ and $R_\theta(M)^\sharp\simeq M^\sharp$.
\item If $h_\lambda$ is a dilation   of ratio $\lambda>0$, then 
$h_\lambda(M^\flat)\simeq  M^\flat$ and $h_\lambda(M^\sharp)\simeq M^\sharp$.
\item If $M_\theta$ denotes the associate surface of $M$ corresponding to the parameter $\theta\in\r$, then    $(M_\theta)^\flat\simeq (M^\flat)_\theta$ and $(M_\theta)^\sharp\simeq (M^\sharp)_\theta$.
\end{enumerate}
\end{proposition}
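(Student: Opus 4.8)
The plan is to carry out everything at the level of the isotropic curve, exploiting that each of $\flat$ and $\sharp$ is a fixed $\c$-linear map applied componentwise, while the surface itself is recovered up to a base point (hence up to a translation) by $X(z)=\Re\int^z\phi\,dz$. So for each transformation $F$ of the ambient space I would compute the isotropic curve of $F(M)$, apply the duality map, and then recognise the outcome as the isotropic curve of $M^\flat$ acted on by an ambient isometry lying in $SO(3)$ or $SO(2,1)$, or as an operation already absorbed into the relation $\simeq$. The statements for $\sharp$ are the mirror images of those for $\flat$, replacing the map $\phi\mapsto(-i\phi_1,-i\phi_2,\phi_3)$ by $\psi\mapsto(i\psi_1,i\psi_2,\psi_3)$, so I would treat $\flat$ in detail and indicate $\sharp$ briefly.

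For (1) and (4) the two operations simply commute with $\flat$ on isotropic curves. A translation adds a constant to $X$, which has zero $z$-derivative, so $T(M)$ and $M$ share the same isotropic curve $\phi$; hence $T(M)^\flat$ and $M^\flat$ share the curve $\psi=(-i\phi_1,-i\phi_2,\phi_3)$ and can differ only in the base point, giving $T(M)^\flat\simeq M^\flat$. For the associate family, $M_\theta$ has isotropic curve $e^{i\theta}\phi$; since the scalar $e^{i\theta}$ commutes with the componentwise linear map defining $\flat$, the curve of $(M_\theta)^\flat$ is $e^{i\theta}(-i\phi_1,-i\phi_2,\phi_3)$, which is exactly the curve $e^{i\theta}\psi$ of $(M^\flat)_\theta$. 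Equality of these two isotropic curves then yields $(M_\theta)^\flat\simeq(M^\flat)_\theta$.

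Part (2) is the substantive one. Since $R_\theta$ is linear, the isotropic curve of $R_\theta(M)$ is $R_\theta\phi=(\phi_1\cos\theta-\phi_2\sin\theta,\ \phi_1\sin\theta+\phi_2\cos\theta,\ \phi_3)$. Applying $\flat$ and using $\psi_j=-i\phi_j$ for $j=1,2$, the first two components become $\psi_1\cos\theta-\psi_2\sin\theta$ and $\psi_1\sin\theta+\psi_2\cos\theta$, while the third remains $\phi_3=\psi_3$. This is precisely the action on $\psi$ of the Lorentzian rotation about the timelike axis $(0,0,1)$. The key point, which I would verify by the explicit $2\times 2$ block, is that this transformation is an orientation-preserving isometry of $\l^3$, i.e. an element of $SO(2,1)$: rotations about a timelike axis are elliptic and act as ordinary Euclidean rotations in the spacelike $xy$-plane while fixing the $z$-coordinate. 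Hence $R_\theta(M)^\flat$ equals $M^\flat$ composed with an element of $SO(2,1)$, so $R_\theta(M)^\flat\simeq M^\flat$; for $\sharp$ the induced map is an ordinary Euclidean rotation in $SO(3)$.

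Finally, (3) is immediate from the way $\simeq$ is defined, namely up to dilations of the ambient space, which preserve the zero–mean–curvature condition; thus $h_\lambda(M^\flat)\simeq M^\flat$ and $h_\lambda(M^\sharp)\simeq M^\sharp$ hold by definition. One may additionally observe that scaling commutes with $\flat$, since the isotropic curve of $h_\lambda(M)$ is $\lambda\phi$, giving $h_\lambda(M)^\flat=h_\lambda(M^\flat)$. The only genuine obstacle is the recognition step in (2): identifying the transformation induced on the dual isotropic curve with an actual element of $SO(2,1)$, rather than with a merely $\c$-linear map that preserves the Lorentzian null cone; once the elliptic character of rotations about the timelike axis is made explicit, the rest is routine.
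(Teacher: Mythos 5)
Your proof is correct and follows essentially the same route as the paper: both arguments check that the duality operation commutes with each transformation at the level of the holomorphic data, the only cosmetic difference being that you work with the isotropic curve $\phi$ where the paper mostly phrases the same computation in terms of the equivalent Weierstrass pair $(g,\omega)$ (and uses $\phi$ itself for item 4). Your extra care in item 2 --- verifying that the induced rotation about the timelike axis actually lies in $SO(2,1)$ --- and your dual reading of item 3 are sound refinements of what the paper leaves as ``immediate.''
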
 

\begin{proof} The three first properties are immediate by observing that if  $(g,\omega)$ is the Weierstrass representation of $M$, then the one of the translation of $M$ is the same, the one of $R_\theta(M)$ is $(e^{i\theta}g,e^{-i\theta}\omega)$ and the one of $h_\lambda(M)$ is $(g,\lambda\omega)$.
For the fourth item, recall that the associate surfaces of  $M$ are the minimal (resp. maximal) surfaces $M_\theta$ whose isotropic curve is $ e^{i\theta}\phi$ (resp. $e^{i\theta}\psi$), $\theta\in\r$. The surface $M_{\pi/2}$ is called the adjoint of $M$. Then the proof of item 4 follows now  immediately.
\end{proof}

As a consequence of this result, the answer to the question posed in Problem 1 is yes when the congruence is a translation,  a rotation about the $z$-axis and also by a dilation of the ambient space.  

   \begin{remark} The duality process   can be also introduced  by multiplying the coordinates  of the isotropic curve  by the unit imaginary number $i$ and/or exchanging the order of the coordinates of the immersion. However all are  equivalent up to congruences and associate surfaces. For example, for a minimal  surface $M$,  the dual   surface  in \cite{lls} is given by  the assignment  $(\phi_1,\phi_2,-i\phi_3)$ as isotropic curve. If we calculate its adjoint surface, we multiply by $i$, obtaining $(i\phi_1,i\phi_2,\phi_3)$ which is the surface $M^\sharp$ of our definition. In \cite{al}, the isotropic curve of the dual surface of $M$ is $(-i\phi_2,i\phi_1,\phi_3)$, which is a $\pi/2$-rotation about the $z$-axis of $M^\sharp$.
     \end{remark}


 \section{The Bj\"{o}rling  problem and rotational maximal surfaces} \label{appendix}
 
 In  this paper  we need to know the isotropic curves   that define a rotational  minimal or maximal surface. For this purpose we utilize the Bj\"{o}rling problem.  The  Bj\"{o}rling  problem consists into finding   a   surface with zero mean curvature  containing a given real analytic curve $\alpha$ called the core curve, and  a prescribed unit analytic normal vector field $V$ along $\alpha$. In case that the ambient space is $\l^3$,  the surface that we are looking for is spacelike and $V$ must be a timelike vector field. The surface that solves  the problem parametrizes as    
\begin{equation}\label{bj}
X(u,v)=\Re\left(\alpha(z)-i\epsilon \int\limits_{z_0}^z V(w)\times \alpha'(w)\ dw \right),
\end{equation}
where $z_0\in I$ is fixed,  $z=u+iv\in\Omega$ and $\times$ stands for the cross-product in each  space,  see \cite{acm,ni}. The solution defined  in (\ref{bj}) considers  the interval $I$ as $I\times\{0\}\subset\mathbb{C}$ and  by analyticity, the functions $\alpha$ and $V$ have holomorphic extensions $\alpha(z)$ and $V(z)$ in a simply-connected domain $\Omega\subset\c$ that contains $I\times\{0\}$.  The surface obtained, called the Bj\"{o}rling surface, is unique under the condition that  $\alpha$ is the parameter curve $v=0$ and its isotropic curve is $X_z=\alpha'(z)-i\epsilon V(z)\times\alpha'(z)$.  

It is usual in the literature to consider the rotational maximal surfaces of $\l^3$ as surfaces obtained by rotating a planar curve and then imposing that the mean curvature vanishes on the surface. In contrast, we revisit the rotational maximal surfaces of $\l^3$  as solutions of suitable Bj\"{o}rling problems when the core curve is a circle of $\l^3$. The discussion depends on the causal character of the rotational axis.  
 
 \begin{enumerate}
 \item Timelike axis. Suppose that the axis is $(0,0,1)$. A rotational maximal surface with axis $(0,0,1)$ is the   Bj\"{o}rling surface when the core curve is the circle $\alpha(t)=(\cos(t),\sin(t),0)$ and the normal vector field along $\alpha$ is $V(t)=\sinh(a) \n(t)+\cosh(a) \b(t)$, $a\in\r$, where  $$\n(t)=(-\cos(t),-\sin(t),0),\quad\quad \b(t)=(0,0,1).$$ If $a=0$ the Bj\"{o}rling surface  is the plane of equation $z=\mbox{constant}$. If $a\not=0$, then (\ref{bj}) gives  
 \begin{equation}\label{rott}
 X(u,v)=\left(
 \begin{array}{c} \cos (u) (\cosh (a) \sinh (v)+  \cosh (v))\\ 
  \sin (u)(\cosh (a) \sinh (v)+ \cosh (v))\\ 
  -v \sinh (a)
  \end{array}\right).
  \end{equation}
 The one-parametric group of rotations with axis $(0,0,1)$ is  
 $$ \left\{R_\theta=
\left(
\begin{array}{ccc}
\cos(\theta)  & -\sin(\theta)&0\\
\sin(\theta) & \cos(\theta) & 0 \\
0 & 0&1 \\
\end{array}
\right):\theta\in\r\right\}.$$
Then it is immediate that   $R_\theta\cdot X(u,v)=X(u+\theta,v)$, proving that $X(u,v)$ is rotational.  The isotropic curve is 
$$\psi(z)=(-\sin(z)-i\cosh(a)\cos(z), \cos(z)-i\cosh(a)\sin(z), i\sinh(a))$$
and the Weierstrass representation is
$$g(z)=-\tanh(a/2) e^{iz},\quad\omega=-\frac{i(1+e^a)^2}{2 e^{a}}e^{-iz}dz.$$ 
This surface is called the elliptic catenoid. 

\item  Spacelike axis. Suppose that the axis is $(1,0,0)$.  The rotational maximal surface is  the Bj\"{o}rling surface when the core curve is the spacelike hyperbola $\alpha(t)=(0,\sinh(t),\cosh(t))$ and $V(t)=\cosh(a) \n(t)+\sinh(a) \b(t)$, $a\in\r$, where  
$$\n(t)=(0,\sinh(t),\cosh(t)),\quad\quad \b(t)=(1,0,0).$$
Expression (\ref{bj}) gives 
\begin{equation}\label{rots}
X(u,v)=\left(\begin{array}{c}
 v \cosh (a)\\
 \sinh (u)( \sinh (a) \sin (v)+ \cos (v))\\
 \cosh (u)(\sinh (a)  \sin (v)+  \cos (v))
 \end{array}\right).
 \end{equation}
Since  the one-parametric group of rotations with axis $(1,0,0)$ is  
 $$ \left\{H_\theta=
\left(
\begin{array}{ccc}
 1  & 0&0\\
0 & \cosh(\theta) & \sinh(\theta) \\
0 & \sinh(\theta)& \cosh(\theta) \\
\end{array}
\right):\theta\in\r\right\},$$
 we find  that $H_\theta\cdot X(u,v)=X(u+\theta,v)$, proving that $X(u,v)$ is rotational.  This surface is called the hyperbolic catenoid.  The isotropic curve is 
 $$\psi(z)=(-i\cosh(a),\cosh(z)-i\sinh(a)\sinh(z),\sinh(z)-i\sinh(a)\cosh(z))$$
 and the Weierstrass representation is
 $$g(z)=\frac{i e^{a+z}+e^a-e^z-i}{e^{a+z}+i e^a+i e^z+1}\quad\omega=-\frac{ \left(e^{a+z}+i e^a+i e^z+1\right)^2}{4 e^{a+z}} dz.$$
 \item Lightlike axis. Suppose that the axis is $(1,0,1)$. A rotational maximal surface with axis    $(1,0,1)$  is the solution of the  Bj\"{o}rling problem  with $\alpha(t)=(-1+t^2/2,t,t^2/2)$ and $V(t)=\sinh(a) e_2(t)+\cosh(a) e_3(t)$, $a\in\r$, where $e_2(t)=\n(t)-\b(t)$ and $e_3(t)=\n(t)+\b(t)$ and 
$$\n(t)=\frac12\left(1,0,1\right),\quad\quad\b(t)=\frac12\left( t^2-1,2t, t^2+1\right).$$
Then the  Bj\"{o}rling surface given by (\ref{bj}) is 
\begin{equation}\label{lig2}
X(u,v)=\left(
\begin{array}{c} 
e^{-a}(  \frac{1}{6} v^3- \frac{1}{2} u^2v )+ \cosh (a)v+\frac{u^2}{2}-\frac{v^2}{2}\\
  u-e^{-a}uv\\
e^{-a}(  \frac{1}{6} v^3- \frac{1}{2} u^2v )+ \sinh (a)v+\frac{u^2}{2}-\frac{v^2}{2}
\end{array}\right)-\left(
\begin{array}{c} 
 1\\
 0\\
0
\end{array}\right).
\end{equation}
The one-parametric group of rigid motions with  axis $(1,0,1)$ is
$$ \left\{P_\theta=
\left(
\begin{array}{ccc}
 1-\frac{\theta^2}{2} & \theta & \frac{\theta^2}{2} \\
 -\theta & 1 & \theta \\
 -\frac{\theta^2}{2} & \theta & 1+\frac{\theta^2}{2}  \\
\end{array}
\right):\theta\in\r\right\},$$
and it is immediate  that   $P_\theta \cdot X(u,v)=X(u+\theta,v)$ for all $\theta$, proving that $M$ is rotational. The isotropic curve of  the surface $X$ is
$$\psi(z)=(z+\frac{i}{2}(e^{-a}z^2-2\cosh(a)), 1+ie^{-a}z, z+\frac{i}{2}(e^{-a} z^2-2\sinh(a)))$$
and the Weierstrass representation is 
$$g(z)=\frac{e^a+iz-1}{e^a+i z+1},\quad\omega=-\frac{ i  \left(e^a+i z+1\right)^2}{2 e^a}dz.$$
This surface is called the parabolic catenoid. 
   \end{enumerate}

Up to dilations and rigid motions, in each one of the three above cases, and independently of the value of the parameter $a\in\r$ in the parametrizations (\ref{rott}), (\ref{rots}) and (\ref{lig2}), the surface  is the unique rotational maximal surface of $\l^3$. For example, and when the axis is timelike, if we multiply $\omega$ by a real number, the surface changes by a dilation.   Thus we suppose $\omega=ie^{-iz}dz$. With the change $-\tanh(a/2) e^{iz}\rightarrow z$, the Weierstrass representation is $g(z)=z$ and $\omega=-\tanh(a/2) dz/z^2$. After  a dilation again, we conclude $g(z)=z$ and $\omega=dz/z^2$, where it does not appear the initial parameter $a$. In the literature and following Kobayashi (\cite{ko}), the above three catenoids are also called as   catenoid of first kind, catenoid of second kind for the surface  and the Enneper  surface of second kind  respectively.

\section{Duality of rotational minimal and maximal  surfaces }\label{s-dual}

In this section  we address  with Problem 2  and we ask if the dual surface of a rotational minimal (maximal) surface is, indeed, rotational.  We begin calculating the dual surfaces of   rotational maximal surfaces of $\l^3$, where the classification depends on the causal character of the rotational axis. In the next computations, we will suppose that the rotational axis is $(0,0,1)$ (timelike),  $(1,0,0)$ (spacelike) and $(1,0,1)$ (lightlike) and we use the isotropic curves and the Weierstrass representations obtained in Sect. \ref{appendix}. 

\begin{enumerate}
\item  Timelike axis.  The Weierstrass representation of the elliptic catenoid $C$ is   $g(z)=z$ and $\omega=dz/z^2$ defined in $M=\c-\{0\}$.  Then the Weierstrass representation of $C^\sharp$ is $g^\sharp(z)=-iz$ and $\omega^\sharp=idz/z^2$. Up to the automorphism $-iz\rightarrow z$, we have $g^\sharp(z)=z$ and $\omega^\sharp =dz/z^2$, which is the Weierstrass representation of the Euclidean catenoid of axis $(0,0,1)$: compare with Rem. \ref{remark2}.

\item Spacelike axis. The Weierstrass representation of the hyperbolic catenoid $C$ is  
$$g(z)=i\frac{e^z-1}{e^z+1},\quad\omega=-i\frac{(e^z+1)^2}{2e^z}dz$$
defined  in $M=\c$.  Then 
\begin{equation}\label{weispacelike}
g^\sharp(z)=\frac{e^z-1}{e^z+1},\quad\omega^\sharp=\frac{(e^z+1)^2}{2e^z}dz.
\end{equation}
An integration of (\ref{weierstrass}) gives the parametrization of $C^\sharp$:   
 $$X^\sharp(u,v)=\left( u,  -\cosh(u)\sin(v),  \cos(v)\cosh(u) \right)-(0,0,1).$$
This surface is the Euclidean catenoid with respect to  the axis of equation $y=0, z=-1$.  
\item Lightlike axis.  By taking $a=0$ in (\ref{lig2}), the Weierstrass representation of the parabolic catenoid $C$ is $$g(z)=\frac{z}{z-2i},\quad\omega=\frac{ i  \left(z-2i\right)^2}{2 }dz.$$
Then   $C^\sharp$  is given by 
\begin{equation}\label{ww2}
g^\sharp(z)=-\frac{iz}{z-2i},\quad \omega^\sharp =-\frac{ \left( z-2i \right)^2}{2}dz
\end{equation}
 and its parametrization  by means of (\ref{weierstrass}) is  
 $$X^\sharp(u,v)=\left(\frac{6( u-uv)-u^3+3  uv^2}{6},  \frac{v^2-u^2-2v}{2},  \frac{3(u^2-v^2-u^2 v)+v^3}{6}   \right).$$
 We prove  that $C^\sharp$ is the Enneper surface. First, the automorphism $z-2i\rightarrow z$ and a dilation of $\e^3$ changes the Weierstrass representation (\ref{ww2}) into
 $$g^\sharp(z)=-i\frac{z+2i}{z},\quad\omega^\sharp=-z^2dz.$$
 The isotropic curve $\psi^\sharp$ is now
 $$\psi^\sharp=\left(\frac{i(2z^2+2iz-1}{2},\frac{2iz-1}{2}, z^2+iz \right),$$
 which has not real periods. The metric of $C^\sharp$ is 
 $$ds^\sharp=\frac12|\omega^\sharp|(1+|g^\sharp(z)|^2)=\left(\frac{|z|^2}{2}+\frac{|z+2i|^2}{2}\right)|dz|.$$ 
 Because $ds^\sharp\geq|z|^2/2|dz|$, then it is immediate that $ds^\sharp$  is complete. As the degree of $g^\sharp$ is $1$,  $C^\sharp$ has total finite curvature equal to $-4\pi$. Since the surface is not the catenoid, it is the Enneper surface by the classification of Osserman (\cite{oss}).
\end{enumerate}
We explicit the above calculations.

\begin{proposition}\label{t1} Let $M$ be a catenoid of $\l^3$ with axis $(0,0,1)$ or $(1,0,0)$. Then its dual surface is the  Euclidean catenoid with the same rotational axis.  The dual surface of the parabolic catenoid of axis   $(1,0,1)$  is   the Enneper surface.
\end{proposition}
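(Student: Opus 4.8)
The plan is to verify Proposition~\ref{t1} by explicit computation of the Weierstrass data, exploiting Remark~\ref{remark2} and the classification theorems for complete minimal surfaces of finite total curvature. The three cases are treated independently according to the causal character of the rotational axis, and in each case the strategy is identical: start from the Weierstrass representation $(g,\omega)$ of the maximal catenoid obtained in Section~\ref{appendix}, apply the definition of the dual by passing to $(g^\sharp,\omega^\sharp)=(-ig,i\omega)$, simplify $(g^\sharp,\omega^\sharp)$ by an admissible automorphism of the parameter domain together with a dilation of $\e^3$, and finally recognize the resulting minimal surface by integrating (\ref{weierstrass}) or by invoking Osserman's classification.

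First I would handle the timelike axis. Here the maximal catenoid has $g(z)=z$, $\omega=dz/z^2$ on $M=\c-\{0\}$, so $g^\sharp=-iz$ and $\omega^\sharp=i\,dz/z^2$. The automorphism $-iz\mapsto z$ of the punctured plane converts this into $g^\sharp(z)=z$, $\omega^\sharp=dz/z^2$, which is precisely the Weierstrass data of the Euclidean catenoid with vertical axis. The spacelike case proceeds the same way: starting from the data for the hyperbolic catenoid I would obtain (\ref{weispacelike}), then integrate (\ref{weierstrass}) to produce an explicit parametrization $X^\sharp(u,v)$, and read off that the image is the Euclidean catenoid about the line $y=0,\,z=-1$. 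In both cases it is worth checking that $\phi^\sharp$ has no real periods, so that the immersion is well defined on the full domain rather than only on a universal cover.

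The parabolic (lightlike) case is where the real work lies, and I expect it to be the main obstacle. Setting $a=0$ in (\ref{lig2}) gives $g(z)=z/(z-2i)$, $\omega=i(z-2i)^2\,dz/2$, hence the dual data (\ref{ww2}). After the automorphism $z-2i\mapsto z$ and a dilation one reaches $g^\sharp(z)=-i(z+2i)/z$, $\omega^\sharp=-z^2\,dz$, from which the isotropic curve $\psi^\sharp$ follows. The surface is no longer a catenoid, so I cannot simply integrate and recognize a standard shape; instead I would apply Osserman's theorem. This requires verifying three things: that $\psi^\sharp$ has no real periods (so the surface is well defined on $\c$), that the induced metric $ds^\sharp=\tfrac12|\omega^\sharp|(1+|g^\sharp|^2)$ is complete — which follows from the lower bound $ds^\sharp\ge \tfrac12|z|^2|dz|$ — and that the total curvature is finite and equal to $-4\pi$, which holds because $g^\sharp$ has degree $1$ as a map to the sphere. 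The delicate point is ensuring the algebraic manipulations of $g^\sharp$ and $\omega^\sharp$ are exact and that the degree count and period vanishing are genuinely confirmed rather than assumed; once these are in place, Osserman's classification forces $C^\sharp$ to be either the catenoid or the Enneper surface, and since it is not the catenoid it must be the Enneper surface, completing the proof.
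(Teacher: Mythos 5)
Your proposal is correct and follows essentially the same route as the paper: the same case-by-case verification, the same automorphisms ($-iz\mapsto z$ for the timelike axis, $z-2i\mapsto z$ plus a dilation for the lightlike axis), the explicit integration of (\ref{weierstrass}) for the spacelike axis, and the identical Osserman argument (no real periods, completeness via $ds^\sharp\geq \tfrac12|z|^2|dz|$, degree-one Gauss map giving total curvature $-4\pi$, hence Enneper since it is not the catenoid). No gaps; the period-vanishing check you flag in the first two cases is also consistent with the paper's treatment.
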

 
From this result, and reversing the dual process,  the dual surfaces of the Euclidean catenoids of axis $(0,0,1)$ and $(1,0,0)$ are two non-congruent maximal surfaces in contrast to the existence of  a deformation by congruences between the two initial Euclidean catenoids. Therefore we have a first example to give a negative answer to  Problem 1. On the other hand, it also provides some answers to Problem 2: the dual surfaces of the elliptic and the hyperbolic catenoid are rotational, but  the dual surface of the parabolic catenoid is not rotational. 

We study the dual surfaces  of the Euclidean catenoid under this deformation. We describe explicitly our setting. Consider  $C$  the Euclidean catenoid with axis $(0,0,1)$ and let $\{G_t:t\in [0,\pi/2]\}$ be the one-parametric group of rotations about $(0,1,0)$.  Then   $G_0=\mbox{id}$ and 
$$ G_t=\left(
\begin{array}{ccc}
 \cos (t) & 0 & \sin (t) \\
 0 & 1 & 0 \\
 -\sin (t) & 0 & \cos (t) \\
\end{array}
\right).$$
Thus $G_t$ is a rotation about the orthogonal line  to the axis of $C$ passing through the origin that transforms the axis   $(0,0,1)$ for $t=0$ into $(1,0,0)$ when $t=\pi/2$.   We rotate $C$    by means of $G_t$.  Along this rotation of $C$ we obtain a  one-parametric family of catenoids $\{C_t:=G_t(C):t\in [0,\pi/2]\}$ which, by Prop. \ref{t1}, we know that the dual surfaces of   the first and the last catenoid, namely, $C_0^\flat$ and $C_{\pi/2}^\flat$ are the elliptic catenoid and the hyperbolic catenoid, respectively. In particular,  $C_0\simeq C_{\pi/2}$ but   $C_0^\flat\not\simeq C_{\pi/2}^\flat$. In this context we ask which are the dual surfaces  $C_t^\flat$ during this process.

We compute the isotropic curves of $C_t$ and $C_t^\flat$ and the corresponding Weierstrass representation. This can done by solving the Bj\"{o}rling problem where the core curve is 
$\alpha(s)=G_t(\cos(s),\sin(s),0)$ and $V(s)$ is the  unit normal vector of $\alpha$, that is, $V(s)=-\alpha''(s)$. The isotropic curves $\phi$ of $C_t$  and $\psi=\phi^\flat$ of $C_t^\flat$ are
$$\phi(z)=\left( -\cos (t) \sin (z)-i \sin (t),\cos (z),\sin (t) \sin (z)-i \cos (t) \right),$$
$$\psi(z)=\left(-\sin (t)+i \cos (t) \sin (z),-i \cos (z),\sin (t) \sin (z)-i \cos (t)\right).$$
We can obtain an explicit parametrization of   $C_t^\flat$ by integrating (\ref{weierstrass}), obtaining
\begin{equation}\label{xtb}
X_t^\flat(u,v)=\left(\begin{array}{c}
-u \sin (t)-\cos (t) \sin (u) \sinh (v)\\ 
\cos (u) \sinh (v)\\
-\sin (t) \cos (u) \cosh (v)+v \cos (t)+\sin (t)
\end{array}\right).\end{equation}

A simple computation yields   the Weierstrass representation of  $C_t^\flat$: 
$$g^\flat(z)=\frac{e^{i (t+z)}+i e^{i t}+e^{i z}-i}{-e^{i (t+z)}-i e^{i t}+e^{i z}-i},\quad\omega^\flat=\frac{ \left(-e^{i (t+z)}-i e^{i t}+e^{i z}-i\right)^2}{4 e^{i (t+z)} }dz,$$
respectively. With the change $z\rightarrow iz$ and after some transformations, we obtain
$$g^\flat(z)=i\frac{\cos\frac{t}{2}e^z-\sin\frac{t}{2}}{\sin\frac{t}{2}e^z+\cos\frac{t}{2}},\quad\omega^\flat=-\frac{ \left(\sin\frac{t}{2}e^z+\cos\frac{t}{2}\right)^2}{i e^z}dz.$$
Up to a multiplication and the division by $i$ in $g^\flat$ and $\omega^\flat$ (see Rem. \ref{remark2}),  we have
\begin{equation}\label{bol}
g^\flat(z)=\frac{\cos\frac{t}{2}e^z-\sin\frac{t}{2}}{\sin\frac{t}{2}e^z+\cos\frac{t}{2}},\quad\omega^\flat=\frac{ \left(\sin\frac{t}{2}e^z+\cos\frac{t}{2}\right)^2}{e^z}dz.
\end{equation}
The maximal surfaces with this  Weierstrass representation    are the Bonnet maximal surfaces in $\l^3$ obtained by Leite in \cite{lei}: compare  the Weierstrass data (\ref{bol}) with the  analogous surfaces in Euclidean space which will appear in (\ref{boe}) and the parametrization $X_t^\flat$ in (\ref{xtb}) with the parametrization of a Bonnet minimal surface of $\e^3$ described in (\ref{xtb2}). With    the change $e^{z}\rightarrow z$, the isotropic curve of $M^\flat$ is  
$$\psi=\left(\frac{z^2+1}{2 z^2},\frac{i\cos(t)(1-z^2) -2 iz \sin (t)}{2 z^2},\frac{2 z \cos (t)+\sin (t)(1-z^2)}{2 z^2}\right)$$
defined in  $M=\c-\{0\}$. We point out that the Weierstrass representation of $M^\flat$ has real periods for $\psi_2$, except at $t=0,\pi/2$, which means that the surface is singly periodic.

\begin{theorem} Let  $\{C_t:t\in [0,\pi/2]\}$ be the one-parametric family of Euclidean catenoids where the parameter $t$ indicates the angle that makes the rotation axis  with the $z$-axis. Then the dual surfaces $C_t^\flat$ belong to the  family of Bonnet surfaces in $\l^3$ starting from the elliptic catenoid for $t=0$ until the hyperbolic catenoid for $t=\pi/2$.
\end{theorem}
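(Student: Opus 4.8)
The plan is to push the rotated catenoid $C_t$ through the duality $\flat$ at the level of isotropic curves, read off the resulting Weierstrass pair, normalize it, and then recognize the normalized family as Leite's Bonnet maximal surfaces; the two endpoints $t=0$ and $t=\pi/2$ I would then treat separately to pin down the elliptic and hyperbolic catenoids.

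First I would obtain the isotropic curve of $C_t$. Because $C_t=G_t(C)$ is a Euclidean catenoid whose waist circle is $\alpha(s)=G_t(\cos s,\sin s,0)$, it solves the Bj\"orling problem with this core curve and unit normal $V=-\alpha''$; formula (\ref{bj}) with $\epsilon=1$ then delivers $\phi$ explicitly, after which the duality $\psi=\phi^\flat=(-i\phi_1,-i\phi_2,\phi_3)$ gives the isotropic curve of $C_t^\flat$. From (\ref{wei}) I would extract $g^\flat=\psi_3/(\psi_1-i\psi_2)$ and $\omega^\flat=(\psi_1-i\psi_2)\,dz$, then apply the substitution $z\mapsto iz$, factor numerator and denominator through $\cos\tfrac t2\,e^z\pm\sin\tfrac t2$, and absorb the leftover factors of $i$ as a $\pi/2$-rotation about the $z$-axis using Rem.\ \ref{remark2}. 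This is exactly the reduction to the normalized pair (\ref{bol}) carried out before the statement, so the computational core is already in place and the real work is the identification and the endpoint checks.

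Next I would match (\ref{bol}) against the Bonnet maximal surfaces of \cite{lei}: comparing (\ref{bol}) with the Euclidean Bonnet data (\ref{boe}) and the parametrization (\ref{xtb}) with (\ref{xtb2}) should exhibit $C_t^\flat$ as the member of Leite's one-parameter Bonnet family whose Bonnet parameter is $t$. For the endpoints, at $t=0$ the pair (\ref{bol}) collapses to $g^\flat(z)=e^z,\ \omega^\flat=e^{-z}\,dz$, which under $e^z\mapsto z$ becomes $(z,dz/z^2)$, the elliptic catenoid of Sect.\ \ref{appendix}; at $t=\pi/2$ the common factor $\cos\tfrac t2=\sin\tfrac t2$ cancels and (\ref{bol}) becomes $g^\flat=(e^z-1)/(e^z+1),\ \omega^\flat=\tfrac12(e^z+1)^2e^{-z}\,dz$, which coincides with the hyperbolic catenoid data $(i(e^z-1)/(e^z+1),\,-\tfrac i2(e^z+1)^2e^{-z}\,dz)$ after the congruence $(g,\omega)\mapsto(ig,-i\omega)$ of Rem.\ \ref{remark2}. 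Since (\ref{bol}) varies continuously and monotonically in $t$, this would realize the whole deformation from the elliptic catenoid to the hyperbolic catenoid inside the Bonnet family.

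The hard part will be the identification itself: showing that (\ref{bol}) is \emph{precisely} Leite's Bonnet family and not merely a one-parameter family sharing the same two endpoints. I would address this by matching (\ref{bol}) term by term with Leite's explicit Weierstrass classification in \cite{lei}, or, failing a direct match of normalizations, by verifying the intrinsic Bonnet characterization for the data (\ref{bol}). A secondary subtlety is the period behaviour already flagged before the statement: for $0<t<\pi/2$ the one-form $\psi_2$ has nonzero real periods, so each $C_t^\flat$ is singly periodic, and I would have to check that this monodromy is consistent with the Bonnet family and degenerates correctly at the two catenoid endpoints.
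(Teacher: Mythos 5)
Your proposal is correct and follows essentially the same route as the paper's own argument: solve the Bj\"{o}rling problem for the rotated core circle, dualize the isotropic curve, normalize the Weierstrass data via $z\mapsto iz$ and Rem.~\ref{remark2} to the pair (\ref{bol}), identify the result with Leite's Bonnet maximal surfaces by comparing (\ref{bol}) with (\ref{boe}) and the parametrization (\ref{xtb}) with (\ref{xtb2}), and record the same observation about real periods of $\psi_2$ for $0<t<\pi/2$. The only (harmless) deviation is that you check the endpoints $t=0,\pi/2$ by direct substitution in (\ref{bol}), whereas the paper obtains them from Prop.~\ref{t1}; both verifications are equivalent.
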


Thus $C_t\simeq C_s$ but $C_t^\flat\not\simeq C_s^\flat$ for any $s,t\in [0,\pi/2]$. From the Lorentzian viewpoint, the axis of $C_t$ is timelike if $t\in [0,\pi/4)$, spacelike if $t\in (\pi/4,\pi/2]$ and lightlike if $t=\pi/4$.  Then we observe that when the axis $L$ is $(0,0,1)$ and $(1,0,0)$, the dual surface of the Euclidean catenoid of axis $L$ is a catenoid in $\l^3$ with the same axis. However, when $t=\pi/4$, the surface $C_{\pi/4}^\flat$ is not   the parabolic catenoid, but a surface in the Bonnet family of maximal surfaces.
  In the next pictures, we show the dual surfaces of the catenoids $C_t$ when they are close to the  elliptic catenoid (Figure \ref{fig1}) and close  to the hyperbolic catenoid (Figure \ref{fig2}).
\begin{figure}[hbtp]
\includegraphics[width=.3\textwidth]{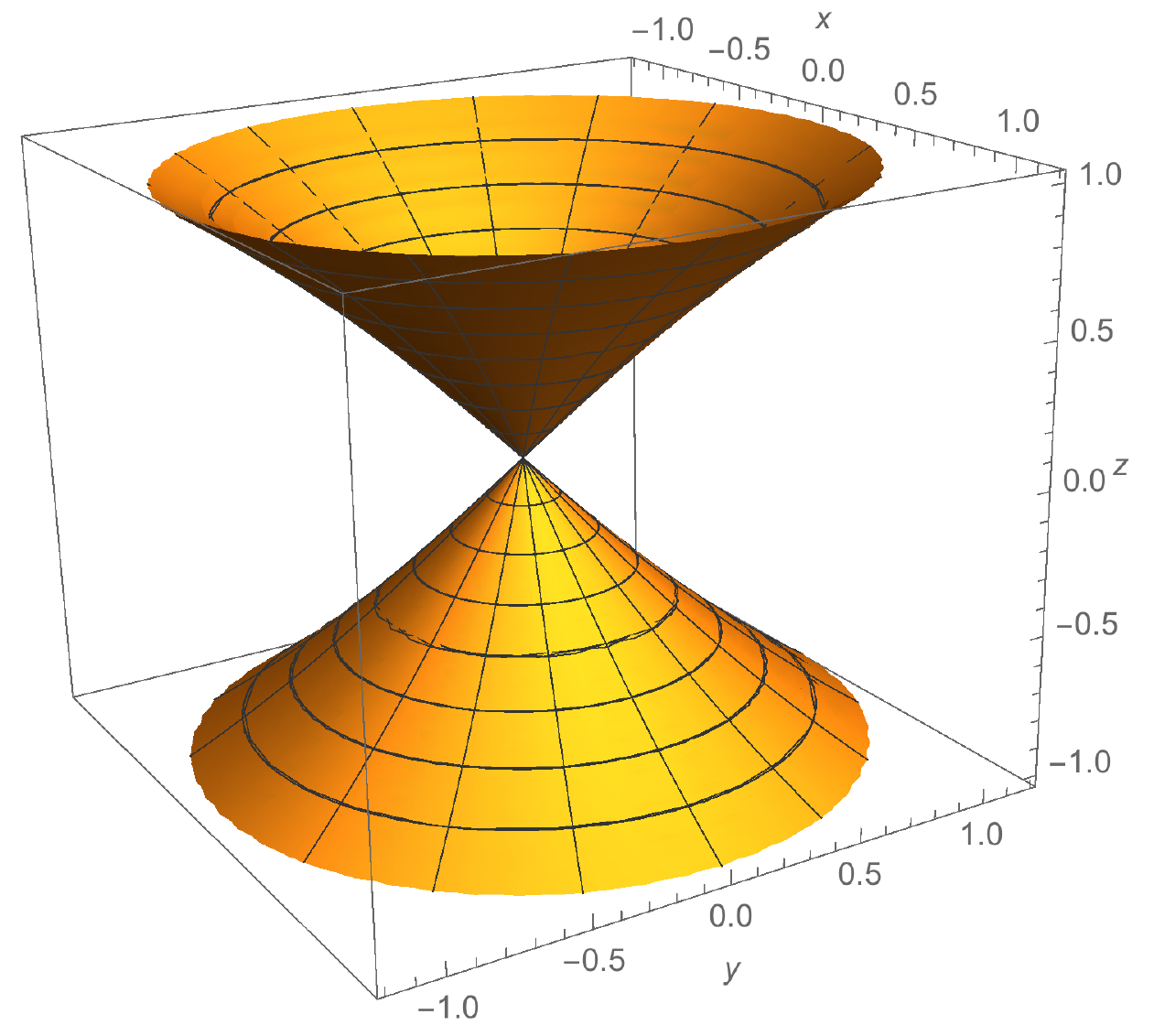} \includegraphics[width=.3\textwidth]{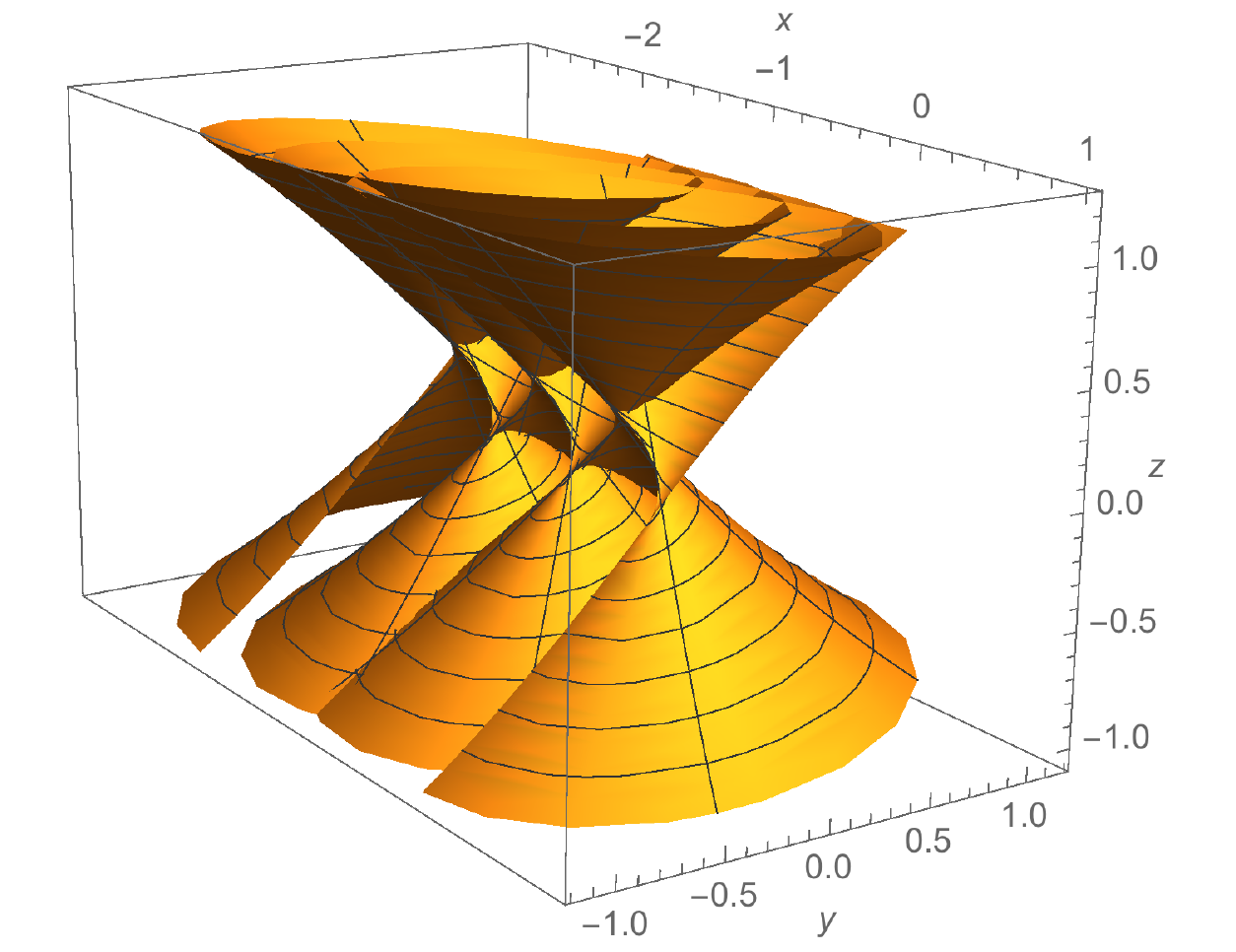}\includegraphics[width=.3\textwidth]{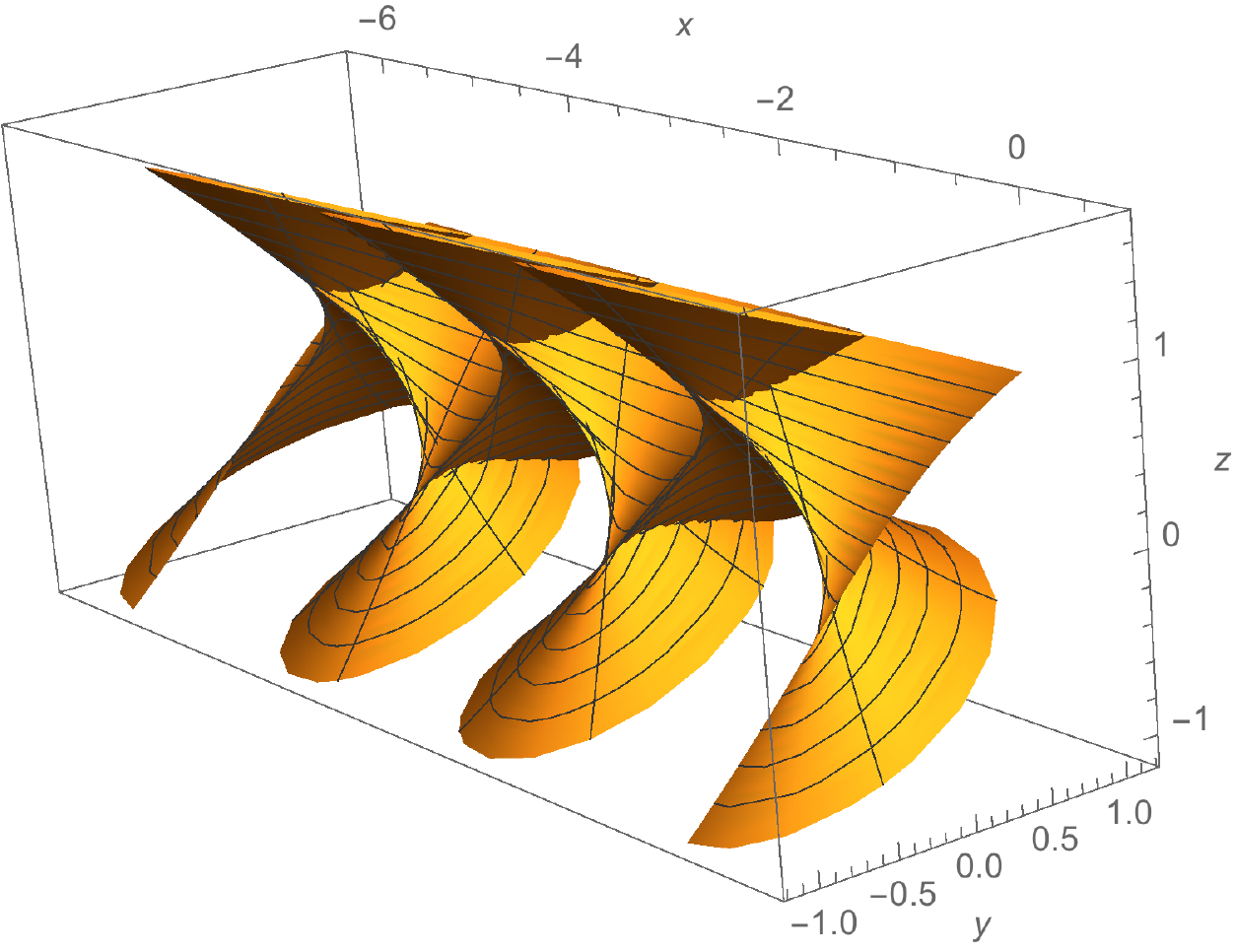}
\caption{The surfaces $C_t^\flat$ for $t=0$ {\it (left)}, $t=0.1$ {\it (middle)} and $t=0.3$ {\it(right)}}\label{fig1}
\end{figure}
\begin{figure}[hbtp]
\includegraphics[width=.3\textwidth]{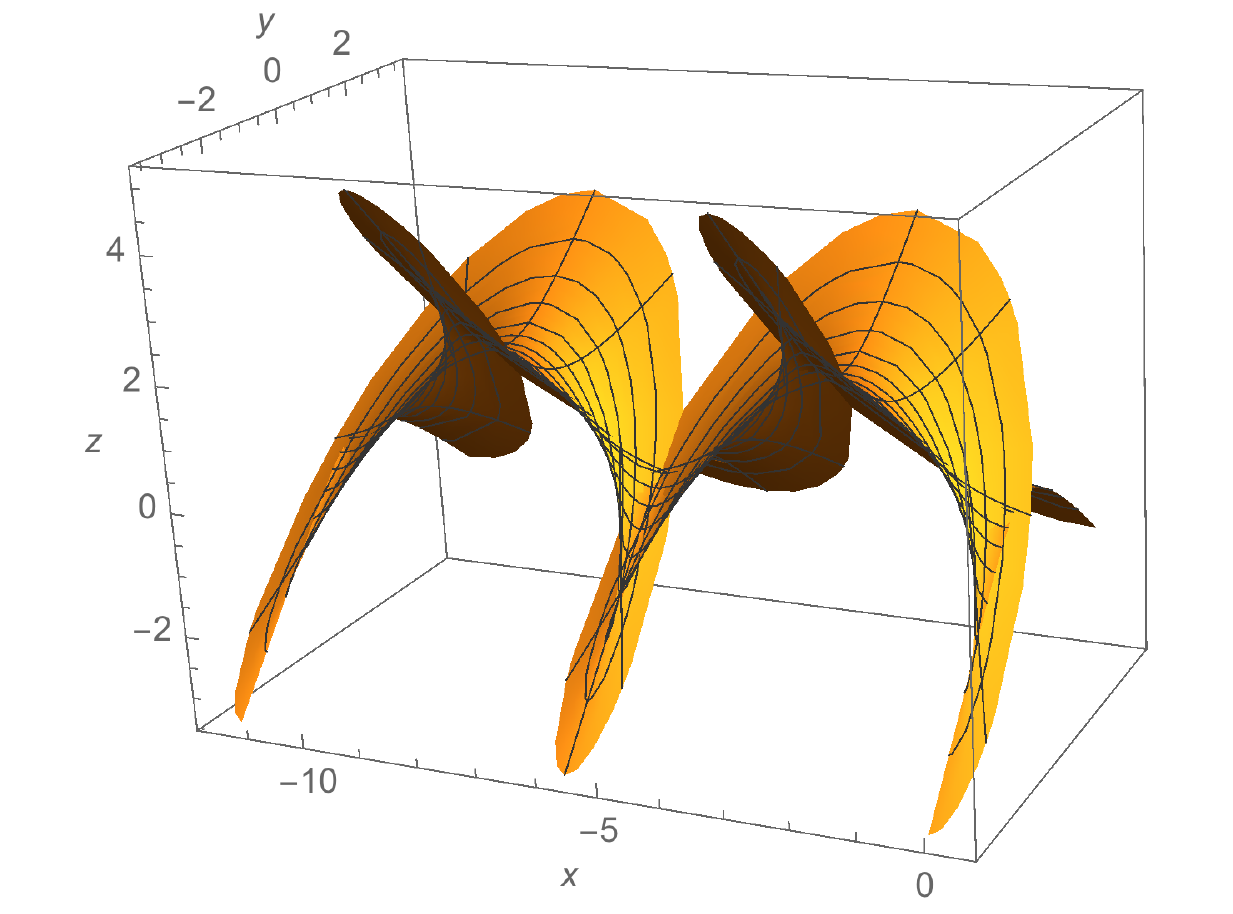} \includegraphics[width=.3\textwidth]{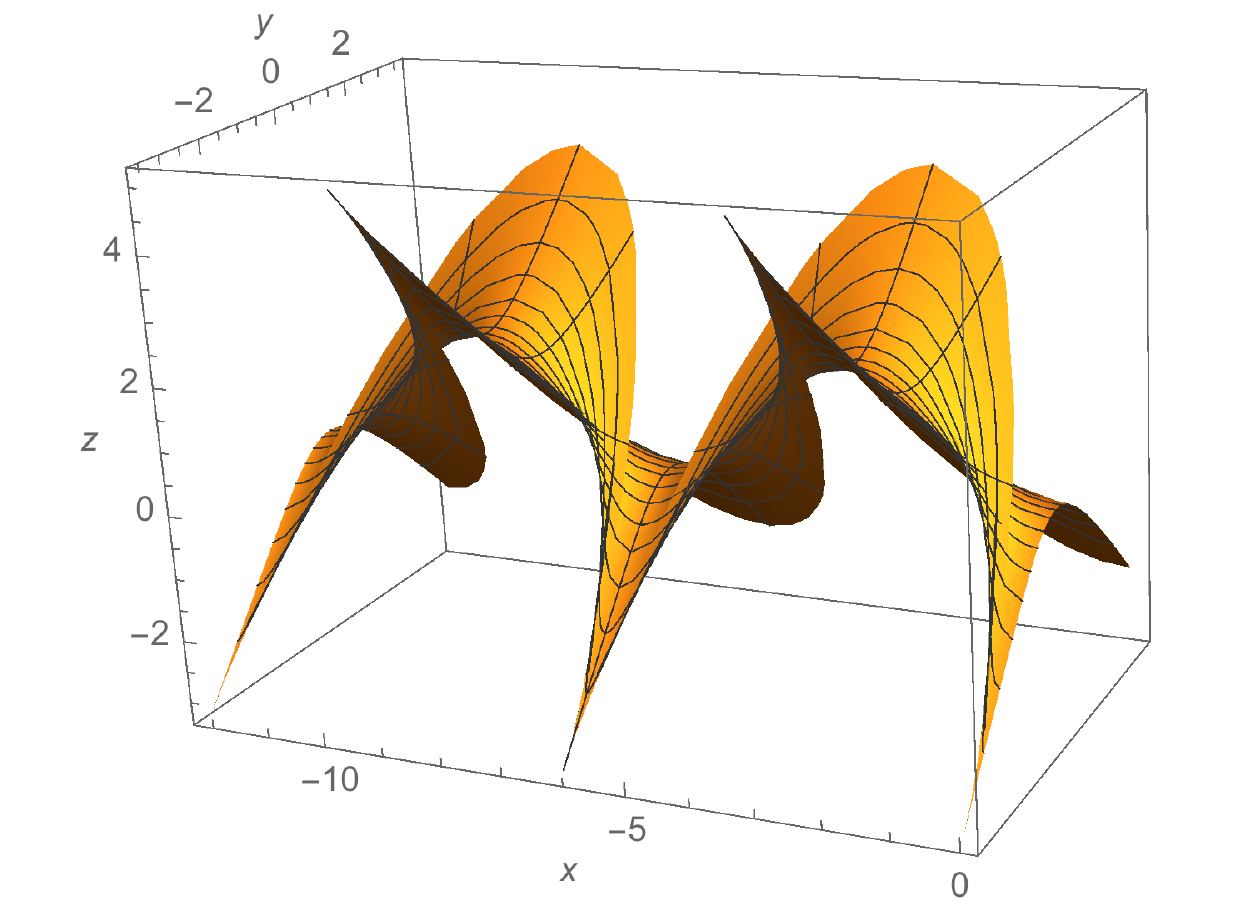}\includegraphics[width=.3\textwidth]{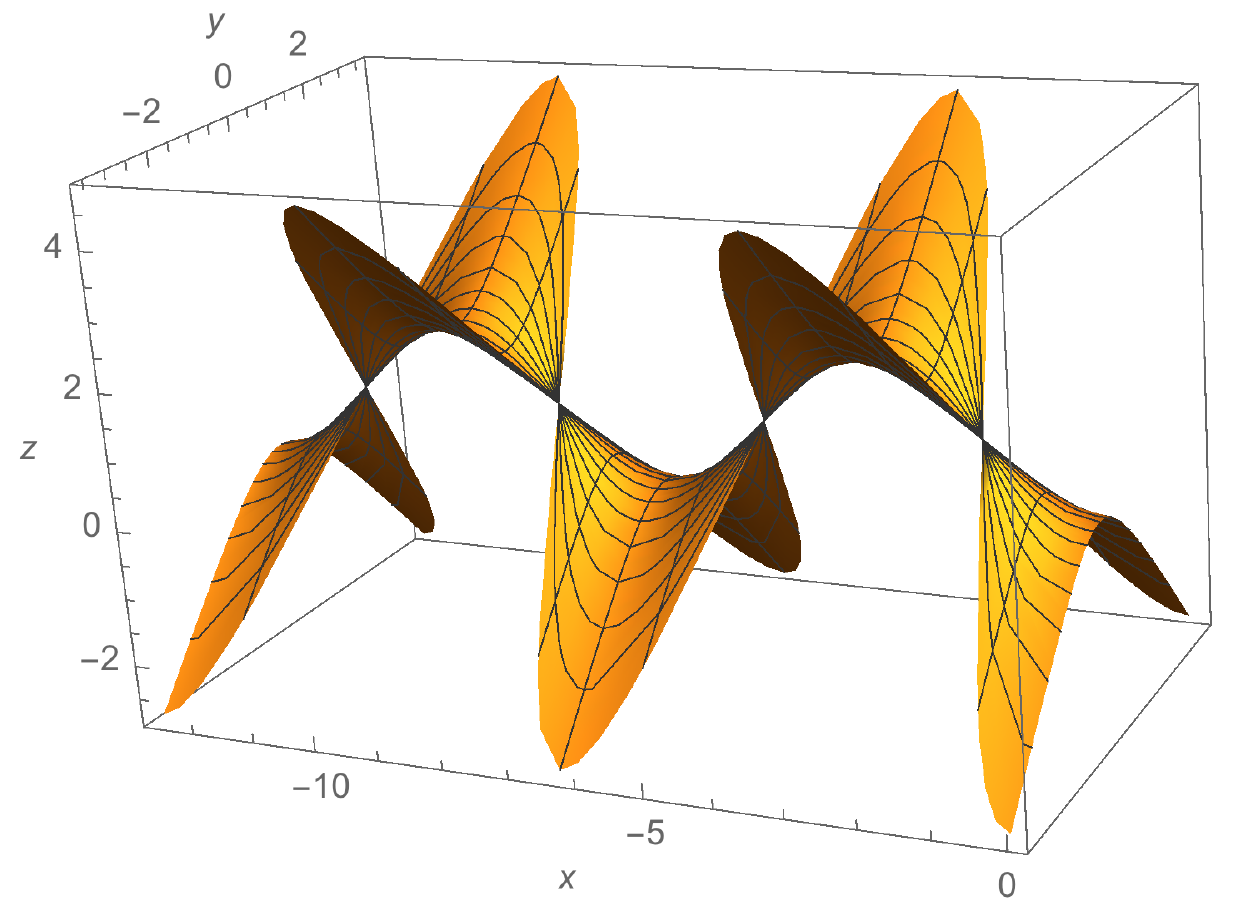}
\caption{The surfaces $C_t^\flat$ for $t=1.1$ {\it (left)}, $t=1.3$ {\it (middle)} and $t=\pi/2$ {\it(right)}}\label{fig2}
\end{figure}

Proposition \ref{pr1} allows to consider the dual surfaces of a helicoid.  In both ambient spaces, minimal and maximal  ruled surfaces are called helicoids.   It was  proved in \cite{mp} that   an associate surface of a catenoid of $\l^3$ with axis $L$ is characterized to be  a maximal surface invariant by a one-parameter group of helicoidal motions.   Since the adjoint surface of the Euclidean catenoid is the helicoid (with the same axis) and the adjoint of the Enneper surface coincides with itself up to a reparametrization and a rotation,  Propositions \ref{pr1} and   \ref{t1} conclude:
 
 \begin{corollary}\label{co1} The dual surface of a helicoid of $\l^3$ with axis $(0,0,1)$ or $(1,0,0)$ is a helicoidal surface of $\e^3$ with the same axis. The dual surface of the helicoid of axis $(1,0,1)$   is the Enneper surface.   \end{corollary}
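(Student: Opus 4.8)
The plan is to reduce the corollary to Proposition~\ref{t1} by writing each helicoid of $\l^3$ as the adjoint of a catenoid and then commuting the duality with the associate-family construction via item~4 of Proposition~\ref{pr1}. Recall that within the associate family $\{C_\theta\}$ of a catenoid $C$ the ruled member is the adjoint surface $C_{\pi/2}$. By the characterization in~\cite{mp}, a maximal helicoid $H$ of $\l^3$ with axis $L\in\{(0,0,1),(1,0,0),(1,0,1)\}$ is an associate surface of the rotational maximal surface (catenoid) $C$ of $\l^3$ with the same axis $L$; being ruled, it is exactly the adjoint, so $H=C_{\pi/2}$.

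Applying item~4 of Proposition~\ref{pr1} to the maximal surface $C$ and using $H=C_{\pi/2}$, I obtain
\[
H^\sharp=(C_{\pi/2})^\sharp\simeq (C^\sharp)_{\pi/2},
\]
so the dual of the helicoid is congruent to the adjoint of the dual of the catenoid, and it only remains to identify the right-hand side by Proposition~\ref{t1} according to the causal character of $L$. If $L=(0,0,1)$ or $L=(1,0,0)$, Proposition~\ref{t1} gives that $C^\sharp$ is the Euclidean catenoid with axis $L$; since the adjoint of the Euclidean catenoid is the Euclidean helicoid with the same axis, $(C^\sharp)_{\pi/2}$ is that helicoid and hence $H^\sharp$ is a helicoidal surface of $\e^3$ with axis $L$. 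If $L=(1,0,1)$, Proposition~\ref{t1} gives that $C^\sharp$ is the Enneper surface, whose adjoint coincides with the Enneper surface up to a rotation and a reparametrization; therefore $H^\sharp\simeq (C^\sharp)_{\pi/2}$ is again the Enneper surface.

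The substantive inputs are already packaged in the cited results, so the remaining work is bookkeeping, and the point I would be most careful about is the commutation step $H^\sharp\simeq (C^\sharp)_{\pi/2}$: this is precisely item~4 of Proposition~\ref{pr1}, which guarantees that duality and the associate-family operation commute up to congruence, and it is what converts a statement about the adjoint of a maximal catenoid into one about the adjoint of its dual minimal surface. The two adjoint identifications used above---the Euclidean catenoid having the helicoid as its adjoint, and the Enneper surface being self-adjoint up to congruence---are classical and are checked at the level of Weierstrass data, since passing to the adjoint amounts to replacing $\omega$ by $i\omega$ (the case $\theta=\pi/2$ of the associate family); feeding them into the displayed congruence finishes the proof.
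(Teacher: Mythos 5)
Your proof is correct and follows essentially the same route as the paper: invoke the characterization from \cite{mp} to realize the Lorentzian helicoid as an associate (adjoint) surface of the catenoid with the same axis, commute duality with the associate-family operation via item~4 of Proposition~\ref{pr1}, and then identify the result using Proposition~\ref{t1} together with the classical facts that the adjoint of the Euclidean catenoid is the helicoid and that the Enneper surface is self-adjoint up to rotation and reparametrization. The only cosmetic difference is that you pin down the helicoid as exactly the adjoint $C_{\pi/2}$, whereas the paper argues with a general associate surface $C_\theta$, which is why its conclusion is phrased as ``a helicoidal surface'' rather than ``the helicoid.''
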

 
\section{Deformations of the Lorentzian catenoid and  duality }\label{s-dual2}

In this section  we  deform a   catenoid $C\subset\l^3$   by  a one-parameter group of rotations of $\l^3$ and we ask which are their dual surfaces. Now  we have three types of catenoid and   we can deform under the three types of rotations of $\l^3$.  We separate in cases depending on the  type of the  catenoid $C$. On the other hand, by Prop. \ref{pr1}, the rotations $R_t$ about the axis $(0,0,1)$  satisfy     $R_t(M)^\sharp\simeq M^\sharp$, and thus we discard this type of rotations because they do not provide new minimal surfaces. As a conclusion,    the rotations that we consider are the hyperbolic  rotations $H_t$ about $(1,0,0)$ and the parabolic rotations $P_t$ about $(1,0,1)$, see Sect. \ref{appendix}.   We also do not use rotations with the same axis of the given catenoid because they do not change the initial catenoid. 

\subsection{Elliptic catenoid}
Consider an elliptic catenoid $C$ with rotation axis  $(0,0,1)$.  Here $C$ is determined by the Weierstrass representation $g(z)=e^z$ and $\omega=e^{-z}dz$ in $M=\c$  and the isotropic curve is 
$\psi(z)=(\cosh (z), -i\sinh (z),1)$.   
\begin{enumerate}
\item Hyperbolic rotations. We consider  the family of surfaces  $\{H_t(C)^\sharp:t\in\r\}$.  Then the isotropic curve of $H_t(C)$ is   
$$H_t(\psi)=\left(\cosh(z),-i  \sinh(z) \cosh (t)+\sinh (t),\cosh (t) -i   \sinh(z) \sinh (t)\right).$$
The isotropic curve of the dual surface $H_t(C)^\sharp$ is
$$H_t(\psi)^\sharp=\left( i \cosh(z),\sinh(z) \cosh (t)+ i \sinh (t),\cosh (t)-i \sinh(z) \sinh (t)\right)$$
and its  Weierstrass representation   is
\begin{equation}\label{boe}
g^\sharp(z)=-i\frac{e^z \cosh  \frac{t}{2} +i\sinh  \frac{t}{2} }{-ie^z \sinh  \frac{t}{2}+\cosh  \frac{t}{2}  },\quad \omega^\sharp=i e^{-z} \left(\cosh  \frac{t}{2} -ie^z \sinh  \frac{t}{2} \right)^2 dz.
\end{equation}
We can integrate explicitly the parametrization of the surface by means of $H_t(\psi)^\sharp$, obtaining
\begin{equation}\label{xtb2}
Y_t(u,v)=\left(\begin{array}{c}
 -\cosh (u) \sin (v),\\
 \cosh (t) \cosh (u) \cos (v) -v\sinh(t)-\cosh (t),\\
 \sin(v)\sinh(u) \sinh (t) +u \cosh (t) 
\end{array}\right).
\end{equation}
See Figure \ref{fig3}, left. As it is known by Prop. \ref{t1},  for $t=0$ we obtain the Euclidean catenoid of axis $(0,0,1)$. For $t\not=0$, the parametrization $Y_t$ in (\ref{xtb2}) coincides with the parametrizatios of  the family of minimal surfaces discovered by Bonnet in \cite{bo}, see \cite[\S 175]{ni}. Besides the plane, the catenoid and the Enneper surface, the Bonnet surfaces   are the only nonplanar minimal surfaces with planar lines of curvature (see   \cite{co}  from a different point of view). 

\begin{remark}\label{rm}  The Weierstrass representation of the Bonnet minimal surfaces in Euclidean space $\e^3$ is, up to congruences and dilations,   $g(z)=e^z+\lambda$ and $\omega=e^{-z}dz$ defined in $M=\c$, where $\lambda\in (0,\infty)$. We denote this surface  by $\mathcal{B}(\lambda)$. After the change $e^z\rightarrow z$ , it is not difficult to see that if $\lambda\not=0$, the isotropic curve $\phi$ has real periods in the second coordinate $\phi_2$ which means that the surface is singly periodic.  The Weierstrass representation of the Bonnet maximal surfaces coincide with the Euclidean case (\cite{al}).
\end{remark}

\item Parabolic rotations. We consider the family of surfaces $\{P_t(C)^\sharp:t\in\r\}$ obtaining that the isotropic curve is
$$P_t(\psi)^\sharp=\left(\begin{array}{c}
 i \left(t (t-2 i \sinh (z))-\left(t^2-2\right) \cosh (z)\right)/2\\
 it(1-\cosh (z))+\sinh (z)\\
   \left(t^2-t (t \cosh (z)+2 i \sinh (z))+2\right)/2 \end{array}\right)$$
and its  Weierstrass representation   is
\begin{equation}\label{ell2} 
g^\sharp(z)=-i\frac{ (t+2 i) e^z-t}{ t e^z+2 i-t},\quad \omega^\sharp=-i\frac{\left(t e^z+2 i-t\right)^2} {4  e^z}  dz.
\end{equation}
We find an explicit parametrization of the surface by means of (\ref{weierstrass}), yielding
\begin{equation}\label{ztu}
Z_t(u,v)=\left(\begin{array}{c}
  t^2 \cosh (u) \sin (v)/2- t^2 v/2+t \cosh (u) \cos (v)-t-\cosh (u) \sin (v)\\
  \cosh (u) (t\sin (v)+\cos(v))-t v -1\\
- t^2 \sinh (u) \cos (v)/2+ t^2 u/2+t \sinh (u) \sin (v)+u\end{array}
\right).
\end{equation}
See Figure \ref{fig3}, right. We prove that the surface $P_t(C)^\sharp$ is a Bonnet surface  after a suitable Goursat transformation. Here we recall  a Goursat transformation of a minimal surface. If $\phi$ is the isotropic curve of a minimal surface $M\subset \e^3$, a Goursat transformation of $M$ is the minimal surface whose isotropic curve is $A\phi$, where $A$ is an element of the complex rotation group $O(3,\c)$ (\cite{go}). In terms of the conformal parameter, a Goursat transformation is characterized by a change of the Gauss map  under a M\"{o}bius transformation $T \in \mbox{Aut}(\c\cup\{\infty\})$ that preserves the Hopf differential (\cite[p. 206]{udo}). In particular, the Weierstrass representation    $(g,\omega)$ of $M$ changes into $\{T(g),\omega/T'(g)\}$ of $T(M)$.

Returning with the surface $Z_t(u,v)$ described in (\ref{ztu}),  we are able  to find a M\"{o}bius transformation $T(z)=(az+b)/(cz+d)$ such that $T(e^z+\lambda)=g^\sharp(z)$, where $\lambda\in (0,\infty)$ is the Bonnet parameter. For this, and in view of the Gauss map $g^\sharp(z)$,  we fix $a=2-it$ and $c=t$. Because $\lambda$ is a positive real number, it is not difficult to find that $b$ and $d$ are given by 
$$b=\mu -\frac{1}{2} i (\mu -2) t,\ d=\frac{1}{2} (\mu -2) t+2 i$$
and $\mu<0$ is a real parameter. The Bonnet  parameter   is 
$\lambda=-\mu/2$. This proves definitely that if $\mathcal{B}(\lambda)$ is the Bonnet minimal surface for the above value of $\lambda$, then $T(\mathcal{B}(\lambda))=P_t(C)^\sharp$.

In (\ref{ell2}) we do the change $e^z\rightarrow z$ and after a $\pi/2$-rotation about the $z$-axis and and reflection about the $xy$-plane, the Weierstrass representation of $P_t(C)^\sharp$ is 
$$g^\sharp(z)= \frac{ (t+2 i) z-t}{ t z+2 i-t},\quad \omega^\sharp= \frac{\left(t z+2 i-t\right)^2} {4z^2}  dz.$$
The isotropic curve is
$$\phi=\left(\begin{array}{c}
\dfrac{-1-it+2itz+(1-it)z^2}{2 z^2}\\
\dfrac{-2i+2t+it^2-2it^2 z+i(-2+2it+t^2)z^2}{4 z^2}\\
\dfrac{t^2-2it-(2t^2+4)z+(t^2+2it)z^2}{4 z^2}
\end{array}\right).$$
Thus the surface $P_t(C)^\sharp$, $t\not=0$, has real periods along the only non-trivial homological curve of $M$ and this says that $P_t(C)^\sharp$ is singly periodic. 
\end{enumerate}
We summarize the result in the next theorem:

\begin{figure}[hbtp]
\includegraphics[width=.48\textwidth]{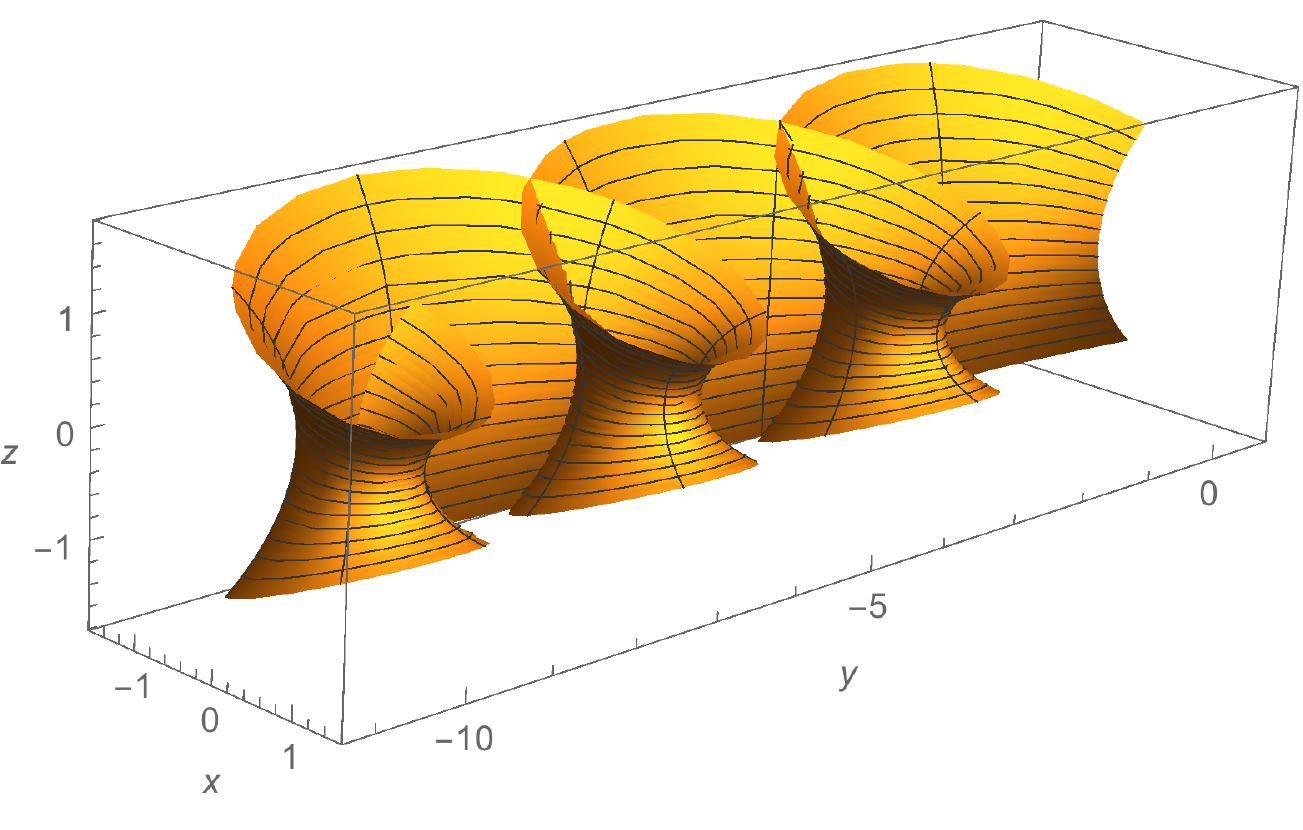} \includegraphics[width=.48\textwidth]{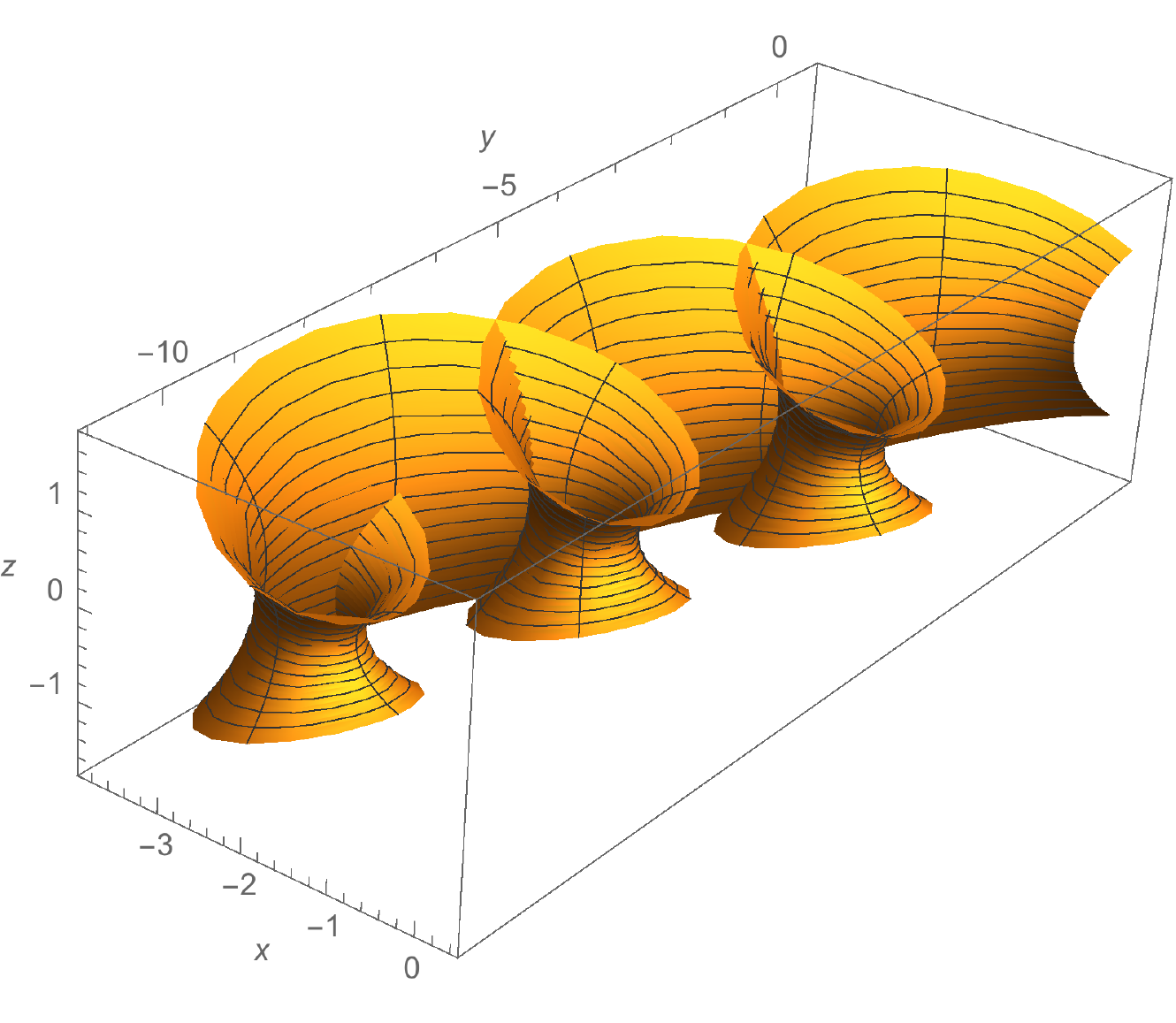}
\caption{Theorem \ref{t51}: the surface $Y_t$ for $t=1$ ({\it left}) and the surface $Z_t$ for $t=1/2$ ({\it right})}\label{fig3}
\end{figure}
 
\begin{theorem}\label{t51} Consider $C\subset\l^3$ the elliptic catenoid of axis $(0,0,1)$. 
The dual surface of $C$ by the hyperbolic rotation $H_t$ about $(1,0,0)$  is a  Bonnet minimal surface. The dual surface of $C$ by a parabolic rotation $P_t$ about $(1,0,1)$ is  a Goursat transformation of a Bonnet minimal surface.
\end{theorem}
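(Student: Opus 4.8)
The plan is to verify both assertions of Theorem \ref{t51} by a direct computation at the level of Weierstrass data, exploiting Remark \ref{remark2} to the effect that duality amounts to keeping the same Weierstrass pair and only reinterpreting the parametrization. The starting object in both cases is the elliptic catenoid $C\subset\l^3$ with $g(z)=e^z$, $\omega=e^{-z}dz$, and isotropic curve $\psi(z)=(\cosh z,-i\sinh z,1)$, as recorded at the top of the Elliptic catenoid subsection.

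\medskip

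\emph{Hyperbolic case.} First I would apply the hyperbolic rotation $H_t$ (the $SO(2,1)$-matrix about $(1,0,0)$ from Sect.\ \ref{appendix}) to $\psi$, producing $H_t(\psi)$, and then take the dual by the recipe $\phi=(i\psi_1,i\psi_2,\psi_3)$ of the definition to obtain the isotropic curve $H_t(\psi)^\sharp$ of the minimal surface $H_t(C)^\sharp\subset\e^3$. Computing $(g^\sharp,\omega^\sharp)$ from (\ref{wei}) yields the data displayed in (\ref{boe}), and integrating (\ref{weierstrass}) gives the explicit parametrization $Y_t(u,v)$ in (\ref{xtb2}). The conclusion that $Y_t$ is a Bonnet minimal surface is then not an abstract argument but an identification: for $t\neq0$ the parametrization (\ref{xtb2}) is literally Bonnet's classical parametrization (cf.\ \cite{bo}, \cite[\S175]{ni}), and for $t=0$ Prop.\ \ref{t1} already gives the Euclidean catenoid, which is the degenerate member $\mathcal{B}(0)$ of the family. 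Equivalently, one checks that (\ref{boe}) matches the normal form $g=e^z+\lambda$, $\omega=e^{-z}dz$ of Remark \ref{rm} up to a congruence and a dilation.

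\medskip

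\emph{Parabolic case.} The same machinery applied with the parabolic group $P_t$ about $(1,0,1)$ produces the isotropic curve $P_t(\psi)^\sharp$ and the Weierstrass data (\ref{ell2}), which integrate to $Z_t(u,v)$ in (\ref{ztu}). The crucial difference is that (\ref{ell2}) is \emph{not} in Bonnet normal form; the Gauss map is a genuine M\"obius image of $e^z+\lambda$ rather than $e^z+\lambda$ itself. The key step is therefore to exhibit a M\"obius transformation $T(z)=(az+b)/(cz+d)\in\mathrm{Aut}(\c\cup\{\infty\})$ with $T(e^z+\lambda)=g^\sharp(z)$ for a suitable Bonnet parameter $\lambda\in(0,\infty)$. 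Reading off $a=2-it$ and $c=t$ from the leading coefficients of $g^\sharp$, I would solve for $b$ and $d$ by imposing that the resulting $\lambda$ be a positive real number, arriving at $b=\mu-\tfrac12 i(\mu-2)t$, $d=\tfrac12(\mu-2)t+2i$ with $\mu<0$ and $\lambda=-\mu/2>0$. Invoking the characterization of a Goursat transformation (\cite{go}, \cite[p.\ 206]{udo}) as the change $(g,\omega)\mapsto(T(g),\omega/T'(g))$ of Weierstrass data under such a $T$, this identifies $P_t(C)^\sharp=T(\mathcal{B}(\lambda))$, which is exactly the claimed Goursat transform of a Bonnet minimal surface.

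\medskip

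The main obstacle is the parabolic case: not the mechanical computation of $(g^\sharp,\omega^\sharp)$, but finding the correct M\"obius transformation and, above all, verifying that the free coefficients $b,d$ can be chosen so that the Bonnet parameter $\lambda$ is simultaneously real and positive. This positivity constraint is what forces the sign condition $\mu<0$ and is the genuinely delicate point, since without it one would only land in some abstract $O(3,\c)$-transform rather than in the geometrically meaningful Bonnet family $\mathcal{B}(\lambda)$ with $\lambda\in(0,\infty)$ of Remark \ref{rm}. The remaining computations—confirming the matrix $A\in O(3,\c)$ underlying $T$, and checking via the isotropic curve $\phi$ that $P_t(C)^\sharp$ carries real periods and is hence singly periodic for $t\neq0$—are routine once $T$ is in hand.
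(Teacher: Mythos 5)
Your proposal is correct and follows essentially the same route as the paper: the same dualization of the rotated isotropic curves leading to (\ref{boe}) and (\ref{ell2}), the same identification of $Y_t$ in (\ref{xtb2}) with Bonnet's classical parametrization (with $t=0$ handled by Prop.\ \ref{t1}), and in the parabolic case the very same M\"obius transformation $T$ with $a=2-it$, $c=t$, $b=\mu-\tfrac12 i(\mu-2)t$, $d=\tfrac12(\mu-2)t+2i$, $\mu<0$, $\lambda=-\mu/2$, giving $T(\mathcal{B}(\lambda))=P_t(C)^\sharp$. You also correctly isolate the one delicate point—forcing $\lambda$ to be real and positive—which is exactly where the paper imposes the sign condition on $\mu$.
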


\subsection{Hyperbolic catenoid}
Let $C$ be the hyperbolic catenoid  of axis $(1,0,0)$ whose Weierstrass representation is $g(z)= i\frac{e^z-1}{e^z+1}$ and $\omega=-i\frac{(e^z+1)^2}{2e^z}dz$ defined in $M=\c$.   If $\psi$ is the isotropic curve of $C$, then the isotropic curve of $P_t(C)^\sharp$ is 
$$P_t(\psi)^\sharp= \frac{1}{2}\left(\begin{array}{c}
  2-t^2+i t (t \sinh (z)+2 \cosh (z)) \\
2 i (\cosh (z)+t (\sinh (z)+i))\\
 \left(t^2+2\right) \sinh (z)+i t^2+2 t \cosh (z) 
  \end{array}\right)$$
and the Weierstrass representation  is
$$g^\sharp(z)= \frac{(1+t+i) e^z-1-i+it}{(1+i+it) e^z+1+i-t},\quad\omega^\sharp= \frac{\left((1-i+t) e^z+1-i+it\right)^2}{4i e^z}dz.$$
By integrating $P_t(\psi)^\sharp$, the parametrization of $P_t(C)^\sharp$ is 
$$W_t(u,v)=\left(\begin{array}{c}
- t^2 u/2-  t \sin (v) (t \sinh (u)+2 \cosh (u))/2+u\\
-\sin (v) (t \sinh (u)+\cosh (u))-t u\\
  \left(\left(t^2+2\right) \cosh (u) \cos (v)-(v+1)t^2 +2 t \sinh (u) \cos (v)-2\right)/2
\end{array}\right).$$
We  prove that this surface is the Goursat transformation of a Bonnet minimal surface. The computations are similar as in Th. \ref{t51}. In view of $g^\sharp(z)$, we choose 
$$T(z)=\frac{az+b}{cz+d},\ \ a=1+t+i,\ c= 1+i+it.$$
Then let  
$$b=\mu+\frac{ \left(t^2+\mu\right)}{t+1}i,\ d=\frac{2+\mu-t^2}{1+t}+(2+\mu)i$$
and $\mu$ is a real parameter. The parameter $\lambda$ of the Bonnet minimal surface  is 
$\lambda=-(1+\mu)/(1+t)$ and the Weierstrass representation of the minimal surface $T(\mathcal{B}(\lambda))$ is $(g^\sharp(z),\omega^\sharp)$.  This proves that $T({\mathcal B}(\lambda))=P_t(C)^\sharp$.

With the change $e^z\rightarrow z$, and up to a $\pi/2$-rotation about the $z$-axis, dilations  and a reflection with respect to the $xy$-plane, we have 
$$g^\sharp(z)=\frac{(t+1+i) z-1-i+it}{(1+t-i) z+1-i+it},\quad\omega^\sharp=\frac{ ((1+t-i) z+1-i+it)^2}{4z^2}dz$$
and
 $$P_t(\psi)^\sharp=\left(\begin{array}{c}
\dfrac{-i+it+2tz-(i+it)z^2}{2 z^2}\\
\dfrac{it(2-t)+(4-2t^2)z+it(t+2)z^2}{4 z^2}\\
\dfrac{-2+2t-t^2+2it^2z+(2+2t+t^2)z^2}{4 z^2}\end{array}\right).$$
Let us observe that $P_t(\psi)^\sharp$ is defined in $\c-\{0\}$ and has real periods in the third coordinate. This means that the surface is singly periodic.
As a conclusion:

\begin{theorem}\label{t52} 
Consider $C$ the hyperbolic catenoid of axis $(1,0,0)$. 
The dual surface of $C$ by a parabolic rotation $P_t$   about $(1,0,1)$ is   a Goursat transformation of a Bonnet minimal surface.
\end{theorem}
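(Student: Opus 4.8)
The plan is to proceed exactly as in the second part of Theorem \ref{t51}, where the parabolic rotation of the elliptic catenoid was treated. The strategy splits into three stages: produce the Weierstrass data of the dual surface $P_t(C)^\sharp$, exhibit a Bonnet surface $\mathcal{B}(\lambda)$ together with a M\"{o}bius transformation $T$ relating the two Gauss maps, and finally verify that $T$ acts on the \emph{whole} Weierstrass pair as a Goursat transformation, so that $T(\mathcal{B}(\lambda))=P_t(C)^\sharp$.

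First I would apply the parabolic rotation $P_t$ about $(1,0,1)$ to the isotropic curve $\psi$ of the hyperbolic catenoid and then take the sharp, i.e. pass to $(i\psi_1,i\psi_2,\psi_3)$, obtaining the isotropic curve $P_t(\psi)^\sharp$ recorded above. Using $g=\phi_3/(\phi_1-i\phi_2)$ and $\omega=(\phi_1-i\phi_2)\,dz$ this yields the Weierstrass pair
$$g^\sharp(z)=\frac{(1+t+i)e^z-1-i+it}{(1+i+it)e^z+1+i-t},\qquad \omega^\sharp=\frac{((1-i+t)e^z+1-i+it)^2}{4i\,e^z}\,dz.$$
This is a direct though lengthy computation, and I would not grind through it beyond recording the answer.

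Next, recalling from Remark \ref{rm} that $\mathcal{B}(\lambda)$ carries the data $g=e^z+\lambda$, $\omega=e^{-z}\,dz$, and that a Goursat transformation sends $(g,\omega)$ to $(T(g),\omega/T'(g))$, the core of the argument is to solve $T(e^z+\lambda)=g^\sharp(z)$ for a M\"{o}bius map $T(z)=(az+b)/(cz+d)$. Reading off the coefficients of $e^z$ in the numerator and denominator of $g^\sharp$ fixes $a=1+t+i$ and $c=1+i+it$; matching the constant terms while demanding that the Bonnet parameter come out \emph{real} then determines $b=\mu+\frac{t^2+\mu}{t+1}\,i$ and $d=\frac{2+\mu-t^2}{1+t}+(2+\mu)\,i$ with $\mu$ a free real parameter, and forces $\lambda=-(1+\mu)/(1+t)$.

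The last stage is to check that, with these coefficients, the transformed one-form $e^{-z}\,dz/T'(e^z+\lambda)$ reproduces $\omega^\sharp$ exactly; this is what certifies that $T$ is a genuine Goursat transformation (equivalently, that it preserves the Hopf differential) rather than a coincidence at the level of Gauss maps alone, and it closes the identification $T(\mathcal{B}(\lambda))=P_t(C)^\sharp$. I expect the main obstacle to be precisely the over-determined matching in the previous paragraph: the identity $T(e^z+\lambda)=g^\sharp$ must hold for all $z$ while $\lambda$ is simultaneously constrained to be real, and it is this tension that pins down both $\mu$-dependent coefficients $b,d$ together with $\lambda$. Verifying the one-form, and not merely the Gauss map, is the step that distinguishes a Goursat transformation from an arbitrary reparametrization and is the delicate point of the argument.
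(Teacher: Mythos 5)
Your proposal is correct and follows essentially the same route as the paper: compute the isotropic curve and Weierstrass data of $P_t(C)^\sharp$, then exhibit the M\"{o}bius map $T(z)=(az+b)/(cz+d)$ with $a=1+t+i$, $c=1+i+it$, $b=\mu+\frac{t^2+\mu}{t+1}i$, $d=\frac{2+\mu-t^2}{1+t}+(2+\mu)i$ and Bonnet parameter $\lambda=-(1+\mu)/(1+t)$, concluding $T(\mathcal{B}(\lambda))=P_t(C)^\sharp$. Your closing emphasis on verifying that the one-form (not just the Gauss map) is matched is exactly the content of the paper's assertion that the Weierstrass representation of $T(\mathcal{B}(\lambda))$ is the full pair $(g^\sharp,\omega^\sharp)$.
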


\subsection{Parabolic catenoid}

We consider the parabolic catenoid $C$ with axis $(1,0,1)$ and whose Weierstrass representation is $g(z)=z/(z-2i)$ and $\omega=i(z-2i)^2/2 dz$ defined in $M=\c$.  The isotropic curve of $C$ is 
$\psi(z)=\left(i z^2/2+z-i,1+i z,z+ i z^2/2\right)$. We deform $C$ by the   hyperbolic rotations $H_t$ about the axis $(1,0,0)$ and we compute  the  dual surfaces of $H_t(C)$. Then the isotropic curve of $H_t(C)^\sharp$ is    
\begin{equation}\label{hts}
H_t(\psi)^\sharp=\left(\begin{array}{c}
1+iz-z^2/2\\
i\cosh t+(-\cosh t+i\sinh t)z- \sinh{t}\ z^2/2\\
\sinh t+(\cosh t+i\sinh t) z+ i\cosh t\ z^2/2 
\end{array}
\right).
\end{equation}
The Weierstrass representation is 
$$g^\sharp(z)=\frac{(\sinh\frac{t}{2}-i\cosh\frac{t}{2} )z-2i\sinh\frac{t}{2}}{(\cosh\frac{t}{2}-i\sinh\frac{t}{2})z-2i\cosh\frac{t}{2}},\quad \omega^\sharp= 
-\frac{1}{2} \left((\cosh\frac{t}{2}-i\sinh\frac{t}{2})z-2i\cosh\frac{t}{2} \right)^2dz.$$
The parametrization of $H_t(C)^\sharp$ is 

$$X_t(u,v)=\frac{1}{6}\left(\begin{array}{c}
- u \left(u^2-3v^2+6v-6\right)\\
  -u \sinh (t) \left(u^2-3v^2+6v\right)-3 \cosh (t) \left(u^2-v^2+2v\right) \\
   \cosh (t) \left(-3 u^2 v+3 u^2+v^3-3 v^2\right)-(6uv-6u) \sinh (t).
\end{array}\right).$$
By (\ref{hts}), the isotropic curve $H_t(\psi)^\sharp$ has not real periods. We show that $H_t(C)^\sharp$  is a complete surface. Then it is not difficult to see that the induced metric $ds^\sharp$ on $H_t(C)^\sharp$ satisfies
$$ds^\sharp=\frac12(|\omega^\sharp|+|\omega^\sharp||g^\sharp|^2)\geq \frac12|\omega^\sharp|
\geq (a|z|^2+b|z|+c)|dz|$$
for certain numbers $a,b,c\in\r$, $a>0$, related with $\cosh(t/2)$ and $\sinh(t/2)$.  Then it is immediate that if $\gamma$ is a divergent path on $M=\c$, that is, if $\gamma$ is a path on $\c$ that has $\infty$ as an end point, then the length of $\gamma$ is $\infty$. Therefore  $H_t(C)^\sharp$ is a complete surface and because the degree  of its Gauss map $g^\sharp$ is $1$, then the total curvature is $-4\pi$. The classification of Osserman proves that $H_t(C)^\sharp$ is the Enneper surface  (\cite{oss}). Since this argument holds for any value of $t$,  we conclude that the quotient space of $\{H_t(C)^\sharp:t\in\r\}$ by congruences has only one element.

\begin{theorem}\label{t53} Consider $C$ the parabolic catenoid of axis $(1,0,1)$. 
The dual surface of $C$ by  the hyperbolic rotations $H_t$ of axis $(1,0,0)$   is the Enneper surface.
\end{theorem}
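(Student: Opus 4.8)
The plan is to exhibit $H_t(C)^\sharp$ through its Weierstrass data and then identify it by means of Osserman's classification of complete minimal surfaces of finite total curvature. First I would apply the hyperbolic rotation $H_t$ to the isotropic curve $\psi$ of the parabolic catenoid and follow it by the dual operation $\sharp$, which multiplies the first two coordinates by $i$; this produces the holomorphic curve $H_t(\psi)^\sharp$ recorded in (\ref{hts}), together with its Gauss map $g^\sharp$ and height differential $\omega^\sharp$. Since the parabolic catenoid is defined on $M=\c$, the dual surface is again defined on $\c$, whose only end is at $z=\infty$. As a regularity check I would verify that the double zero of $\omega^\sharp$ sits exactly at the simple pole of $g^\sharp$, so that $g^\sharp\omega^\sharp$ and $(g^\sharp)^2\omega^\sharp$ stay holomorphic there and the induced metric is regular and nonvanishing on all of $\c$.

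Next I would verify that $H_t(\psi)^\sharp$ has no real periods: because it is an entire $\c^3$-valued curve on the simply-connected domain $\c$, it admits a holomorphic primitive, so the integral in (\ref{weierstrass}) is single-valued and $X_t$ is well-defined. I would then prove completeness. Starting from $ds^\sharp=\tfrac12|\omega^\sharp|(1+|g^\sharp|^2)\ge\tfrac12|\omega^\sharp|$ and using that $\omega^\sharp$ is (up to a constant) the square of a nonconstant linear form in $z$ whose leading coefficient $\cosh\tfrac t2-i\sinh\tfrac t2$ has modulus $\sqrt{\cosh t}>0$, I would bound $ds^\sharp$ from below by $(a|z|^2+b|z|+c)\,|dz|$ with $a>0$. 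On the domain $\c$ every divergent path satisfies $|z|\to\infty$, so this quadratic lower bound forces such a path to have infinite length; hence $H_t(C)^\sharp$ is complete.

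Finally I would compute the total curvature. As $g^\sharp$ is a M\"obius transformation of $z$, its degree equals $1$, so the standard formula for the total curvature of a complete minimal surface of finite total curvature gives $\int_M K\,dA=-4\pi$. By the classification of Osserman (\cite{oss}) the only complete minimal surfaces with total curvature $-4\pi$ are the catenoid and the Enneper surface; since $H_t(C)^\sharp$ is conformally the whole plane $\c$ with a single end, whereas the catenoid is conformally $\c\setminus\{0\}$ with two ends, it must be the Enneper surface. Because nothing in the argument depends on the parameter, the surfaces $H_t(C)^\sharp$ are mutually congruent and the quotient of $\{H_t(C)^\sharp:t\in\r\}$ by congruences is a single point. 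The step I expect to be the main obstacle is the completeness estimate, namely producing a clean quadratic lower bound on $|\omega^\sharp|$ that is uniform in $t$ and confirming that its leading coefficient $\cosh t\ge 1$ never degenerates; by contrast, excluding the catenoid is immediate from the topology of the conformal domain.
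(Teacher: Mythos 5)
Your proposal is correct and follows essentially the same route as the paper: compute the Weierstrass data of $H_t(C)^\sharp$, note the absence of real periods, establish completeness via the quadratic lower bound $ds^\sharp\geq\tfrac12|\omega^\sharp|\geq(a|z|^2+b|z|+c)|dz|$, and invoke Osserman's classification with total curvature $-4\pi$. Your explicit exclusion of the catenoid by the conformal type of the domain (one end versus two) is a nice touch that the paper leaves implicit, but it does not change the argument.
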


 \section*{Acknowledgements} The first author has been partially supported by  the MINECO/FEDER grant MTM2014-52368-P.   This paper was prepared while the second author was visiting the Departamento de Geometr\'{\i}a y Topolog\'{\i}a, Universidad de Granada. The second author wishes to thank this institution for its hospitality.

\end{document}